\theoremstyle{theorem}
\newtheorem{thm}{Theorem}[section]
\newtheorem{lemma}[thm]{Lemma}
\newtheorem{theorem}[thm]{Theorem}
\newtheorem{proposition}[thm]{Proposition}
\newtheorem{corollary}[thm]{Corollary}
\theoremstyle{definition}
\newtheorem{definition}[thm]{Definition}
\newtheorem{example}[thm]{Example}
\newtheorem{convention}[thm]{Convention}
\theoremstyle{remark}
\newtheorem{remark}[thm]{Remark}
\newtheorem{question}[thm]{Question}
\DeclareMathOperator{\Spec}{{Spec}}
\DeclareMathOperator{\Supp}{{Supp}}
\DeclareMathOperator{\Sing}{{Sing}}
\DeclareMathOperator{\Hom}{Hom}
\newcommand{\ba}{\mathfrak{a}}
\newcommand{\bP}{\mathbb{P}}
\DeclareMathOperator{\Ann}{{Ann}}
\DeclareMathOperator{\Image}{Image}
\newcommand{\cf}{{\itshape cf.} }
\DeclareMathOperator{\Div}{div}
\newcommand{\bm}{\mathfrak{m}}
\DeclareMathOperator{\HH}{H} %MK
\renewcommand{\O}{\mathcal O}
\def\MatrixDiagram#1{
$\begin{xy}
(0,12)*=0{}="1", (12,12)*=0{}="2",
(0,9)*=0{}="3", (12,9)*=0{}="4",
(0,6)*=0{}="5", (12,6)*=0{}="6",
(0,3)*=0{}="7", (12,3)*=0{}="8",
(0,0)*=0{}="9", (12,0)*=0{}="10",
(3,12)*=0{}="11", (3,0)*=0{}="12",
(6,12)*=0{}="13", (6,0)*=0{}="14",
(9,12)*=0{}="15", (9,0)*=0{}="16",
(3,9)*=0{}="a1", (3,6)*=0{}="a2", (3,3)*=0{}="a3",
(6,6)*=0{}="b1", (6,3)*=0{}="b2",
(9,3)*=0{}="c1",
"1";"2"  **@@{.} ,"3";"4"  **@@{.} ,"5";"6"  **@@{.} ,"7";"8"  **@@{.} ,"9";"10"  **@@{.} ,
"1";"9"  **@@{.} ,"11";"12"  **@@{.} ,"13";"14"  **@@{.} ,"15";"16"  **@@{.} ,"2";"10"  **@@{.}
@+"9"\PATH ~={**\dir{-}}
#1
\end{xy}
$
}
\begin{document}

%\begin{frontmatter}
 \title{An algorithm for computing compatibly Frobenius split subvarieties}
\author{Mordechai Katzman and Karl Schwede} %MK
\subjclass[2000]{14B05, 13A35}
\keywords{Frobenius map, Frobenius splitting, compatibly split, test ideal, algorithm, prime characteristic}
\address{Department of Pure Mathematics\\ University of Sheffield\\ Hicks Building, Hounsfield Road\\ Sheffield S3 7RH, United Kingdom}
\email{m.katzman@sheffield.ac.uk}
\address{Department of Mathematics\\ The Pennsylvania State University\\ University Park, PA, 16802, USA}
\email{schwede@math.psu.edu}
\thanks{The first author was partially supported by Royal Society grant TG091897 and EPSRC grant EP/H040684/1.}
\thanks{The second author was partially supported by a National Science Foundation postdoctoral fellowship and NSF DMS 1064485/0969145.}
\maketitle

\begin{abstract}
This paper describes an algorithm which produces all ideals compatible with a given surjective Frobenius near-splitting.
\end{abstract}

%\begin{keyword}
%\MSC 14B05 \sep  13A35

%Frobenius map, Frobenius splitting, compatibly split, test ideal, algorithm, prime characteristic
%\end{keyword}

%\end{frontmatter}

%%%%%%%%%%%%%%%%%%%%%%%%%%%%%%%%%%%%%%%%%%%%%%%%%%
\section{Introduction}
%%%%%%%%%%%%%%%%%%%%%%%%%%%%%%%%%%%%%%%%%%%%%%%%%%

Suppose that $X$ is an algebraic variety over a field of characteristic $p > 0$. If $X$ is Frobenius split with a Frobenius splitting $\phi : F^e_* \O_X \to \O_X$, then $X$ satisfies numerous remarkable properties.  The following varieties possess a Frobenius splitting: toric varieties, Schubert varieties, ordinary Abelian varieties and, at least when reduced to characteristic $p \gg 0$, Fano varieties.  In this context, it is very natural to study the compatibly $\phi$-split subvarieties ($Z \subseteq X$ whose ideal sheaf $I_Z$ satisfies $\phi(F^e_* I_Z) \subseteq I_Z$).
These special subvarieties play a fundamental role whenever Frobenius split varieties are studied (see \cite{BrionKumarFrobeniusSplitting}).  Recent independent work in \cite{KumarMehtaFiniteness}, and independently by the second author, \cite{SchwedeFAdjunction}, have shown that there are only finitely many such subvarieties, also see \cite{SharpGradedAnnihilatorsOfModulesOverTheFrobeniusSkewPolynomialRing} and \cite{EnescuHochsterTheFrobeniusStructureOfLocalCohomology}.  In this paper, building on the ideas from \cite{KumarMehtaFiniteness, SchwedeFAdjunction}, as well as ideas coming from tight closure theory, we exhibit an algorithm which computes all the compatibly $\phi$-split subvarieties.

While Frobenius split varieties need not be affine, in this paper we restrict ourselves to affine varieties.  This is not a terribly restrictive hypothesis since compatibly split subvarieties of a projective variety $X$ can be studied either on affine charts or by considering the affine cone over $X$.  Our main result is as follows:
\vskip 6pt
\begin{quote}
Given a ring $R$ and a surjective map $\phi : F^e_* R \to R$ (for example a Frobenius splitting), we exhibit an algorithm which produces all the $\phi$-compatible ideals.
\end{quote}
\vskip 6pt
At each step, the algorithm produces the unique smallest non-zero $\phi$-compatible ideal, the so-called test ideal.
%Because compatibly $\phi$-split subvarieties are closely related to log canonical centers, this algorithm can also be used to find subvarieties that are (likely to be) log canonical centers of a given pair, see \cite{SchwedeCentersOfFPurity} for details.

Finally, we also explore a variant of this algorithm under the hypothesis that $\phi$ is not necessarily a Frobenius splitting (or even surjective).
This algorithm, and the original, have been implemented in Macaulay2 \cite{KatzmanSchwedeM2FSplitting}. %MK

\medskip
\noindent{\it Acknowledgements: }  The authors would like to thank both referees, Allen Knutson, Lance Miller and Kevin Tucker for numerous extremely useful comments on this paper.  The authors would also like to thank Jen-Chieh Hsiao for several discussions.
The authors first worked out the details of this algorithm in March 2010, when the second author visited the first funded by the Mathematics and Statistics Research Centre at the University of Sheffield.  This algorithm also has similarities to an algorithm due to Knutson-Lam-Speyer and implemented by Jenna Rajchgot, see \cite[Theorem 5.3]{KnutsonLamSpeyerProjectionsOfRichardson}.  The primary difference between these algorithms is the way singularities are handled.  Macaulay2  \cite{M2}, has been used extensively in the writing of this paper both in constructing and exploring examples, as well as implementing the algorithm described herein.

%%%%%%%%%%%%%%%%%%%%%%%%%%%%%%%%%%%%%%%%%%%%%%%%%%
\section{Notation and background}
%%%%%%%%%%%%%%%%%%%%%%%%%%%%%%%%%%%%%%%%%%%%%%%%%%

\begin{convention}
\label{conv.PolynomialConvention}
Through this paper all rings are commutative and of finite type over a perfect field $k$ of characteristic $p > 0$, or they are a localization of such a ring.
\end{convention}
The algorithm in this paper indeed also holds on a much larger class of rings: at a basic level, this algorithm works for \emph{any} ring of equal characteristic $p$ such that the Frobenius map is a finite map (this condition is often called being \emph{$F$-finite}).
%We further suppose that $R$ is $F$-finite.
We use the notation $F_* R$ (respectively $F^e_* R$) to denote $R$ viewed as an $R$-module via Frobenius (respectively, via $e$-iterated Frobenius).  More generally, for any $R$-module $M$, $F^e_* M$ denotes the module $M$ with the induced $R$-module structure via Frobenius.  Additionally, given an element $r \in R$, we will use $F^e_* r$ to denote the corresponding element of $F^e_* R$.  Finally, if $I = \langle f_1, \dots, f_m \rangle \subseteq R$ is an ideal, we use $I^{[p^e]}$ to denote the ideal $\langle f_1^{p^e}, \dots, f_n^{p^e} \rangle$. This is easily seen to be independent of the choice of generators of $I$.

\begin{definition}
We say that an $R$-linear map $\phi : F^e_* R \to R$ is a \emph{splitting of ($e$-iterated) Frobenius}, or simply a \emph{$F$-splitting}, if $\phi$ sends $F^e_* 1$ to $1$.  If $R$ has a Frobenius splitting, then we say that $R$ is \emph{$F$-split (by $\phi$)}.
\end{definition}

\begin{definition}
Given any $R$-linear map $\phi: F^e_* R \to R$ (not necessarily a splitting), we say that an ideal $J \subseteq R$ is \emph{$\phi$-compatible} if $\phi(F^e_* J) \subseteq J$, or simply that \emph{$J$ is compatible with $\phi$}.  If the $\phi$ is clear, sometimes we will only say that $J$ is \emph{compatible}.  If $\phi$ is indeed a Frobenius splitting, then we say that \emph{$J$ is compatibly ($\phi$-)split}.
\end{definition}

Given $\phi$ which is compatible with $J$ as above, then there always exists a commutative diagram:
\begin{equation}
\label{eq.CompatibleDiagram}
\xymatrix{
F^e_* R \ar[d] \ar[r]^{\phi} & R \ar[d]\\
F^e_* (R/J) \ar[r]_-{\phi/J} & R/J
}
\end{equation}
where the vertical arrows are the canonical surjections.  We will use $\phi/J$ to denote the induced map $F^e_* (R/J) \to R/J$ as pictured above.

The following well-known Lemma, which we will rely on heavily, follows immediately from the diagram above.

\begin{lemma}
\label{lem.CompatibleIdealsOfDiagrams}
 Assuming a commutative diagram (\ref{eq.CompatibleDiagram}) as above, the $\phi$-compatible ideals containing $J$ are in bijective correspondence with the $\phi/J$-compatible ideals of $R/J$.
\end{lemma}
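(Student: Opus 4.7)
The plan is to use the classical lattice isomorphism (correspondence theorem) between ideals of $R$ containing $J$ and ideals of $R/J$, and then verify that this bijection sends $\phi$-compatible ideals to $\phi/J$-compatible ideals and vice versa. Write $\pi : R \to R/J$ (and $F^e_* \pi : F^e_* R \to F^e_*(R/J)$) for the quotient maps appearing as the vertical arrows of diagram (\ref{eq.CompatibleDiagram}). The bijection sends $I \supseteq J$ to $I/J = \pi(I)$, with inverse $\bar I \mapsto \pi^{-1}(\bar I)$; this is standard, so the only content is tracking compatibility across the square.

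First I would handle the forward direction. Assume $I \supseteq J$ with $\phi(F^e_* I) \subseteq I$. A typical generator of $F^e_*(I/J)$ has the form $F^e_* \pi(x)$ for some $x \in I$, which equals $(F^e_*\pi)(F^e_* x)$. Applying $\phi/J$ and using commutativity of (\ref{eq.CompatibleDiagram}) gives $(\phi/J)(F^e_*\pi(x)) = \pi(\phi(F^e_* x)) \in \pi(I) = I/J$, as desired.

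Conversely, given $\bar I \subseteq R/J$ with $(\phi/J)(F^e_* \bar I) \subseteq \bar I$, set $I = \pi^{-1}(\bar I)$, which automatically contains $J = \ker \pi$. For $x \in I$, commutativity of (\ref{eq.CompatibleDiagram}) yields $\pi(\phi(F^e_* x)) = (\phi/J)(F^e_* \pi(x)) \in \bar I$, so $\phi(F^e_* x) \in \pi^{-1}(\bar I) = I$; hence $\phi(F^e_* I) \subseteq I$. Finally, one checks that the two assignments $I \mapsto I/J$ and $\bar I \mapsto \pi^{-1}(\bar I)$ are mutually inverse on the respective sets of compatible ideals, which is immediate from the usual correspondence theorem.

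There is no real obstacle: the entire argument is a diagram chase riding on the correspondence theorem. The only thing worth being slightly careful about is that $F^e_*(I/J)$ really is generated (as an additive group, and hence as a set through which $\phi/J$ must be checked) by elements of the form $F^e_*\pi(x)$ with $x \in I$ — but this is immediate from surjectivity of the left vertical arrow $F^e_* \pi$ in (\ref{eq.CompatibleDiagram}).
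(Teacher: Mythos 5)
Your proof is correct and is exactly the diagram chase the paper has in mind: the paper gives no separate argument, stating only that the lemma ``follows immediately'' from the commutative diagram (\ref{eq.CompatibleDiagram}), which is what you have written out via the correspondence theorem and the two compatibility checks. No issues.
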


The following theorem motivates the main question of this paper.  Its proof motivates the method of the algorithm

\begin{theorem}\cite{KumarMehtaFiniteness}, \cite{SchwedeFAdjunction}
If $\phi : F^e_* R \to R$ as above is surjective, then there are finitely many $\phi$-compatible ideals.
\end{theorem}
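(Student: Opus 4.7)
The plan is to reduce the theorem to finiteness of $\phi$-compatible \emph{prime} ideals, and then apply Noetherian induction via Lemma~\ref{lem.CompatibleIdealsOfDiagrams}. First I would show that every $\phi$-compatible ideal $J$ is radical. Since $\phi$ is surjective, pick $r \in R$ with $\phi(F^e_* r) = 1$. If $x^{p^e} \in J$, then $R$-linearity of $\phi$ gives
\[
x \;=\; x \cdot \phi(F^e_* r) \;=\; \phi\bigl(F^e_*(r x^{p^e})\bigr) \;\in\; \phi(F^e_* J) \;\subseteq\; J,
\]
and iterating shows $x^{p^{en}} \in J \Rightarrow x \in J$, so $\sqrt{J} \subseteq J$.

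Next I would show that each minimal prime $P_i$ of a compatible ideal $J = P_1 \cap \cdots \cap P_n$ is itself compatible. By prime avoidance, choose $z_i \in \bigl(\bigcap_{j \neq i} P_j\bigr) \setminus P_i$; then $z_i^{p^e} x \in J$ whenever $x \in P_i$, so
\[
z_i \cdot \phi(F^e_* x) \;=\; \phi\bigl(F^e_*(z_i^{p^e} x)\bigr) \;\in\; \phi(F^e_* J) \;\subseteq\; P_i,
\]
and $z_i \notin P_i$ forces $\phi(F^e_* x) \in P_i$. Combining these two steps, every $\phi$-compatible ideal is the intersection of its finitely many compatible minimal primes, so it suffices to prove finiteness of the set of compatible primes.

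For the latter I would use Noetherian induction, assuming the result for every proper quotient $R/I$ with its induced (still surjective) map. The minimal primes $Q_1, \dots, Q_m$ of $R$ are finitely many and all compatible (the previous step applied to $J = 0$); any other compatible prime $P$ strictly contains some $Q_i$, and when $Q_i \neq 0$, Lemma~\ref{lem.CompatibleIdealsOfDiagrams} identifies $P/Q_i$ with a compatible prime of the proper quotient $R/Q_i$, finitely many by induction. The genuine obstacle is the domain case, where the unique minimal prime $(0)$ is not proper and this reduction fails. To handle it, I would invoke the existence of a smallest nonzero $\phi$-compatible ideal $\tau \subseteq R$ (a ``test ideal''), which is guaranteed by the surjectivity of $\phi$ together with the $F$-finiteness of $R$. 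Every nonzero compatible prime then contains $\tau$, so Lemma~\ref{lem.CompatibleIdealsOfDiagrams} matches such primes with compatible primes of the proper quotient $R/\tau$, finite by the inductive hypothesis; together with $(0)$ this bounds the compatible primes of $R$ and completes the proof.
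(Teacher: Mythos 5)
Your proposal is correct and follows essentially the same route as the paper's own sketch: reduce to the prime/domain case (the paper says ``reduce to the case that $R$ is a domain,'' which your radicality and minimal-prime steps simply make explicit, cf.\ Proposition \ref{propFSplittingsProperties}(c)), invoke the existence of the unique smallest nonzero $\phi$-compatible (test) ideal exactly as the paper does via Proposition \ref{propFSplittingsProperties}(b),(e), and finish by Noetherian induction. The only difference is presentational: you phrase the induction algebraically through quotients by compatible ideals, while the paper phrases it geometrically through the proper closed subset $\overline{X_1}$.
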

\begin{proof}
 Because the proof is motivating, we give a rough sketch of it here.  Reduce to the case that $R$ is a domain.  Fix a divisor $D_{\phi}$ on the normal locus of $X = \Spec R$ which corresponds to $\phi$ (in the usual sense of Frobenius splittings, see Proposition \ref{propFSplittingsProperties}(b) and \cite{BrionKumarFrobeniusSplitting}).  Then by Proposition \ref{propFSplittingsProperties}(b) below, all of the $\phi$-compatible ideals have support contained within
\[
 \Supp(D) \cup \Sing(X)
\]
where $\Sing(X)$ is the non-regular locus of $X$.
A roughly equivalent statement from the tight-closure perspective is as follows:  all of the non-zero $\phi$-compatible ideals contain the big test ideal $\tau_b(R, \phi)$ (the unique smallest non-zero $\phi$-compatible ideal).

Regardless, take $X_1$ to be the union of the $\phi$-compatible subvarieties (vanishing loci of the $\phi$-compatible ideals).  Either of the previous observations imply that the closure of $X_1$ is a proper closed subset of $X = \Spec R$.  Then repeat the process replacing $X = \Spec R$ by $\overline{X_1}$ and apply Noetherian induction.
\end{proof}

Therefore, in order to turn this theorem into an algorithm, one needs a way to identify the union of all subvarieties compatible with a given splitting $\phi$.  Equivalently and more algebraically, one needs to identify the smallest $\phi$-compatible ideal (that ideal is called a ``test ideal'').

We list some basic properties of Frobenius splittings that we will need in what follows.

\begin{proposition}
\label{propFSplittingsProperties}
Suppose that $R$ is as in Convention \ref{conv.PolynomialConvention} and that $X = \Spec R$.  We will assume that $\phi \in \Hom_R(F^e_* R, R)$ is a non-zero element.  Then:
\begin{itemize}
\item[(a)]  We have an isomorphism $\Hom_R(F^e_* R, R) \cong F^e_* \omega_R^{(1-p^e)} \cong F^e_* \Gamma(X, \O_X( (1-p^e)K_X) )$ of $F^e_* R$-modules.
In particular, if $R$ is Gorenstein then $\Hom_R(F^e_* R, R)$ is a locally free rank-one $F^e_* R$-module.
If $\Hom_R(F^e_* R, R)$ is free as an $F^e_* R$-module, we label a generator of this $F^e_*R$-module by $\Phi_R$.
\item[(b)]  By (a), $\phi$ corresponds to an effective divisor $D_{\phi}$ linearly equivalent to $(1-p^e)K_X$.  Furthermore, for every $\phi$-compatible ideal $J$, $V(J) \subseteq X$ is contained within the set $(\Supp D_{\phi}) \cup (\Sing X)$.
\item[(c)]  If $\phi$ is surjective, then the set of $\phi$-compatible ideals is a finite set of radical ideals closed under sum and primary decomposition.
\item[(d)]  Let $\phi^t$ denote the composition
$$F^{te}_* R  \xrightarrow[]{F^{(t-1)}_* \phi} F^{(t-1) e}_* R  \xrightarrow[]{F^{(t-2)}_* \phi} \dots \xrightarrow[]{F_* \phi} F^e_* R \xrightarrow[]{\phi} R .$$
Then if $J$ is $\phi$-compatible, it is also $\phi^t$-compatible for any $t > 0$.  If $\phi$ is surjective, then every $\phi^t$-compatible ideal is also $\phi$-compatible.
\item[(e)]  If $\phi$ is surjective, $R$ is a domain, and $\ba \neq 0$ is any ideal that vanishes on the set $(\Supp D_{\phi}) \cup (\Sing X)$, then
\[
\phi( F^e_* \ba ) \subseteq \phi^2 (F^{2e}_* \ba) \subseteq \phi^3(F^{3e}_* \ba) \subseteq \dots
\]
stabilizes to be the unique smallest $\phi$-compatible ideal.
\end{itemize}
\end{proposition}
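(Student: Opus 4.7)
My plan is to tackle the five parts in a bootstrapped order: prove (a) and the easy direction of (d) first, then derive (b), (c), the converse of (d), and finally (e).

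For (a), I would invoke Grothendieck duality for the finite morphism $F^e \colon R \to F^e_* R$: this gives $\Hom_R(F^e_* R,\omega_R) \cong F^e_* \omega_R$, from which one recovers the formula for $\Hom_R(F^e_* R, R)$ by tensoring with $\omega_R^{-1}$ on the regular locus and then extending by reflexivity (the sheaf $\Hom_R(F^e_* R, R)$ is $S_2$ on the normal locus). The Gorenstein case is then immediate. For (b), the nonzero $\phi$ corresponds via (a) to a nonzero section of $\omega_R^{(1-p^e)}$ with vanishing divisor $D_\phi$. To prove the support containment, localize a $\phi$-compatible $J$ at a prime $P \notin \Supp D_\phi \cup \Sing X$: there $R_P$ is regular, $\Hom_{R_P}(F^e_* R_P, R_P)$ is free on a generator $\Phi$, and $\phi_P = u\Phi$ for a unit $u$, so $J_P$ is $\Phi$-compatible. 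A standard result in F-singularity theory -- regular rings are strongly F-regular -- then forces $J_P \in \{0, R_P\}$: for any nonzero $c \in J_P$ one can find $\psi = w\Phi \in \Hom_{R_P}(F^e_* R_P, R_P)$ with $\psi(F^e_* c) = 1$, so $1 = \Phi(F^e_* wc) \in \Phi(F^e_* J_P) \subseteq J_P$. In the domain case this forces $J \not\subseteq P$, giving $V(J) \subseteq \Supp D_\phi \cup \Sing X$.

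For (d), the forward direction is a straightforward induction on $t$ using $\phi^t(F^{te}_* J) = \phi^{t-1}(F^{(t-1)e}_* \phi(F^e_* J)) \subseteq \phi^{t-1}(F^{(t-1)e}_* J)$. For the converse, assume $\phi$ (hence $\phi^{t-1}$) is surjective and $J$ is $\phi^t$-compatible. Pick $v$ with $\phi^{t-1}(F^{(t-1)e}_* v) = 1$; then for any $k \in K := \phi(F^e_* J)$,
\[
k = k \cdot \phi^{t-1}(F^{(t-1)e}_* v) = \phi^{t-1}(F^{(t-1)e}_* k^{p^{(t-1)e}} v),
\]
with $k^{p^{(t-1)e}} v \in K$, so $k \in \phi^{t-1}(F^{(t-1)e}_* K) = \phi^t(F^{te}_* J) \subseteq J$. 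With this in hand (c) follows: closure under sums is immediate; for radical closure, given $r \in \sqrt J$ take $k$ with $r^{p^{ke}} \in J$ and $v$ with $\phi^k(F^{ke}_* v) = 1$, whence $r = \phi^k(F^{ke}_* r^{p^{ke}} v) \in \phi^k(F^{ke}_* J) \subseteq J$ by (d)-forward; and for the minimal primes $P_1, \dots, P_m$ of a radical compatible $J$, prime avoidance gives $r_i \in \bigcap_{j \neq i} P_j \setminus P_i$, and for $x \in P_i$ one has $r_i^{p^e} x \in J$, hence $r_i \phi(F^e_* x) = \phi(F^e_* r_i^{p^e} x) \in P_i$, so primality forces $\phi(F^e_* x) \in P_i$. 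Finiteness is the already-cited Theorem 2.5.

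For (e), surjectivity of $\phi$ provides $u$ with $\phi(F^e_* u) = 1$, so every $a \in \ba$ satisfies $a = \phi(F^e_* a^{p^e} u)$ with $a^{p^e} u \in \ba$; thus $\ba \subseteq \phi(F^e_* \ba)$, making the displayed chain ascending. By Noetherianity it stabilizes to an ideal $J$ with $\phi(F^e_* J) = J$, non-zero because it contains $\ba$. For minimality, take any non-zero $\phi$-compatible $I$: by (b), $V(I) \subseteq V(\ba)$, so $\ba^N \subseteq I$ for some $N$. Choose $k$ with $p^{ke} \geq N$ and $v$ with $\phi^k(F^{ke}_* v) = 1$: for $a \in \ba$ one has $a = \phi^k(F^{ke}_* a^{p^{ke}} v)$ with $a^{p^{ke}} v \in \ba^N \subseteq I$, so $\ba \subseteq \phi^k(F^{ke}_* I) \subseteq I$; iterating gives $J = \phi^m(F^{me}_* \ba) \subseteq I$. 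The main obstacle I anticipate is the regular-local-ring step in (b), both in correctly invoking strong F-regularity and in confirming that unit rescaling of generators of $\Hom_{R_P}(F^e_* R_P, R_P)$ genuinely preserves compatibility; once that is secure, everything else reduces to clean manipulations driven by surjectivity of the iterates $\phi^k$.
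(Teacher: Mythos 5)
Your proofs of (c), (d) and (e) are correct, and in places more self-contained than the paper, which disposes of (a), (b) and the finiteness in (c) by citation: your direct computation for the converse in (d) (writing $k=\phi^{t-1}(F^{(t-1)e}_* k^{p^{(t-1)e}}v)$ for $k\in\phi(F^e_*J)$ and using $\phi^{t-1}(F^{(t-1)e}_*\phi(F^e_*J))=\phi^t(F^{te}_*J)$) replaces the paper's reduction to a maximal compatible prime in a local ring, and your proof of (e) via $\ba^N\subseteq I$ sidesteps the paper's appeal to radicality of $I$; both arguments are clean and valid.

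The genuine gap is in (b), exactly at the step you flagged as the anticipated obstacle. After localizing at $P\notin(\Supp D_\phi)\cup(\Sing X)$ you assert that for any nonzero $c\in J_P$ there is $\psi=w\Phi\in\Hom_{R_P}(F^e_*R_P,R_P)$ with $\psi(F^e_*c)=1$. At the fixed level $e$ this is false: $\psi(F^e_*c)=\Phi(F^e_*wc)$ ranges over the ideal $\Phi(F^e_*cR_P)$, which is the unit ideal if and only if $c\notin\m_P^{[p^e]}$, so for instance $c$ equal to the $p^e$-th power of a nonunit (say $c=x^{p^e}$ in $k[x]_{(x)}$) admits no such $\psi$. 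Strong $F$-regularity only supplies a witness at some sufficiently large level $e'$, and you would then need $J_P$ to be compatible with that map, which does not follow from $\Phi$-compatibility at level $e$ alone. The standard repair needs two inputs you omitted: (i) by your forward direction of (d), $J_P$ is $\Phi^t$-compatible for every $t$, and in the regular local case $\Phi^t$ generates $\Hom_{R_P}(F^{te}_*R_P,R_P)$ as an $F^{te}_*R_P$-module (a consequence of (a), or a direct Fedder-type computation), so $J_P$ is compatible with \emph{every} map at every level $te$; (ii) by Krull's intersection theorem $\bigcap_t\m_P^{[p^{te}]}=0$, so for $t\gg 0$ one has $c\notin\m_P^{[p^{te}]}$, and freeness of $F^{te}_*R_P$ over the regular local ring $R_P$ (Kunz) then yields $\psi'$ at level $te$ with $\psi'(F^{te}_*c)=1$, whence $1\in J_P$. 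With that substitution your argument for (b) — and hence (e), which relies on it — goes through; note the paper itself proves this part of (b) only by citing Kumar--Mehta and test-element theory, so supplying this corrected argument is a genuine addition.
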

\begin{proof}
Parts (a) and the first part of (b) are in \cite[Chapter 1]{BrionKumarFrobeniusSplitting} among many other places.
The last part of (b) can be found in \cite{KumarMehtaFiniteness} but also is an immediately consequence of the
theory of (sharp) test elements for pairs, see for example \cite{SchwedeFAdjunction}.   The finiteness of part (c), as mentioned before was independently obtained in \cite{KumarMehtaFiniteness} and \cite{SchwedeFAdjunction}.  The other parts of (c) are the same as for compatibly split ideals and can be found in \cite[Chapter 1]{BrionKumarFrobeniusSplitting}.

The first statement in (d) is obvious.  The second statement can be found in \cite[Proposition 4.1]{SchwedeCentersOfFPurity} although we also include a short proof for the convenience of the reader.  To this end, suppose that $\phi^t(F^{te}_* J) \subseteq J$ for some surjective $\phi : F^e_* R \to R$.  It follows that $\phi^t$ is also surjective from which it follows that $J$ is radical.  All of the associated primes of $J$ are also $\phi^t$-compatible and since an intersection of $\phi$-compatible ideals is again compatible, it is harmless to assume that $J$ is prime.  Finally, a prime ideal is $\phi^t$ or $\phi$-compatible if and only if it remains compatible after localization at itself, and so we may assume that $J = \bm$ is a maximal $\phi^t$-compatible ideal in a local ring $(R, \bm)$.  But now suppose that $\phi(F^e_* J) \nsubseteq J = \bm$.  In other words that the ideal $\phi(F^e_* J)$ is not contained in the maximal ideal of a local ring, $(R, \bm)$.  Thus $\phi(F^e_* J) = R$ which certainly implies that $\phi^t(F^{te}_* J) = R$ and completes the proof of (c).

We also prove (e) in some detail.  First notice that $\mathfrak{a}$ is necessarily contained in every non-zero $\phi$-compatible ideal (because they are radical).  Since $\phi$ is surjective, there exists $c \in R$ such that $\phi(F^e_* c) = 1$.  For any $z \in \ba$, $c z^{p^e} \in \ba$ as well and so $z = \phi(F^e_* z^{p^e} c) \in \phi(F^e_* \ba)$.  Thus $\ba \subseteq \phi(F^e_* \ba)$.  Repeating this argument yields  the ascending chain in (e).  By the Noetherian hypothesis, this stabilizes say at $C$.  By construction, it is a $\phi$-compatible ideal.  If $J \neq 0$ is any other $\phi$-compatible ideal then $J = \sqrt{J} \supseteq \ba$ by (b) and so the fact that $C$ is minimal follows immediately.
\end{proof}

We point out a particular special case of (b) that we will use later.

\begin{corollary}
Using the notation from Proposition \ref{propFSplittingsProperties}(b), if $\Hom_R(F^e_* R, R)$ is a \emph{free} $F^e_* R$-module with generator $\Phi_R$, then $\phi(F^e_* \bullet) = \Phi_R(F^e_* z \cdot \bullet)$ for some $z \in R$.  In this case $D_{\phi} = \Div(z)$.
\end{corollary}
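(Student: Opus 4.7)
The first statement is a direct unpacking of the freeness hypothesis. The plan is to observe that since $\Hom_R(F^e_* R, R)$ is a free $F^e_* R$-module on $\Phi_R$, any $\phi$ is of the form $F^e_* z \cdot \Phi_R$ for a unique $z \in R$. By the definition of the $F^e_* R$-module structure on $\Hom_R(F^e_* R, R)$, multiplication of a homomorphism by $F^e_* z \in F^e_* R$ is exactly precomposition with multiplication by $F^e_* z$ on $F^e_* R$. Thus for any $\bullet \in R$,
\[
\phi(F^e_* \bullet) \;=\; (F^e_* z \cdot \Phi_R)(F^e_* \bullet) \;=\; \Phi_R(F^e_* z \cdot F^e_* \bullet) \;=\; \Phi_R(F^e_* (z \bullet)).
\]
There is essentially nothing to prove here beyond writing down the definitions.

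For the divisor statement, the plan is to appeal directly to Proposition \ref{propFSplittingsProperties}(a). That proposition identifies $\Hom_R(F^e_* R, R)$ with $F^e_* \Gamma(X, \O_X((1-p^e)K_X))$ as $F^e_* R$-modules, and the divisor $D_\phi$ attached to $\phi$ is, by definition, the zero divisor of the corresponding section of $\O_X((1-p^e)K_X)$. Under this identification, the generator $\Phi_R$ corresponds to a generator (trivializing section) $s_{\Phi_R}$ of $\O_X((1-p^e)K_X)$ as an $F^e_* R$-module, so $D_{\Phi_R} = 0$. The identification is $F^e_* R$-linear, so the section corresponding to $\phi = F^e_* z \cdot \Phi_R$ is $z \cdot s_{\Phi_R}$. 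Taking divisors of zeros yields
\[
D_\phi \;=\; \Div(z \cdot s_{\Phi_R}) \;=\; \Div(z) + D_{\Phi_R} \;=\; \Div(z),
\]
which is the desired equality.

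The only place a reader might stumble is verifying that the isomorphism of part (a) is genuinely $F^e_* R$-linear and that the generator $\Phi_R$ corresponds to a nowhere-vanishing section. Both facts are built into the construction of the isomorphism in \cite[Chapter 1]{BrionKumarFrobeniusSplitting}, so no independent argument is needed. The whole corollary is essentially a reformulation of (a) in the free case, and I would present it as such rather than redoing any computation.
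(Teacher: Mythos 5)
Your proposal is correct and is essentially the argument the paper has in mind: the corollary is stated without proof as an immediate consequence of Proposition \ref{propFSplittingsProperties}(a)--(b), and your unpacking of the $F^e_* R$-module structure on $\Hom_R(F^e_* R, R)$ (premultiplication by $F^e_* z$) together with the observation that the generator $\Phi_R$ corresponds to a nowhere-vanishing section, so $D_\phi = \Div(z) + D_{\Phi_R} = \Div(z)$ on the normal locus, is exactly the intended reasoning.
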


%Part (e) is a straightforward exercise once one realizes that $\mathfrak{a}$ is necessarily contained in every $\phi$-compatible ideal.  To see this, notice that since $\phi$ is surjective, every $\phi$-compatible ideal is radical and so contains $\ba$ by part (b).  The reason the chain ascends is because $\phi$ is surjective (in the non-surjective case, similar chains can be made by adding the previous outputs to the later outputs, this constructs a \emph{test ideal}).

Consider the following example with regards to Proposition \ref{propFSplittingsProperties}(a).

\begin{example}
\label{ex.PolyRing}
 Set $S = k[x_1, \dots, x_n]$ where $k$ is a perfect field of characteristic $p$.  In this case, $F^e_* S$ is a free $S$-module with basis $\{ F^e_* x_1^{\lambda_1} \dots x_n^{\lambda_n} \}_{0 \leq \lambda_i \leq p^e-1}$.  The map $\Phi_S$ from Proposition \ref{propFSplittingsProperties}(a) is the map which sends the basis element $F^e_* x_1^{p^e - 1} \dots x_n^{p^e - 1}$ to $1$ and all the other basis elements to zero.  We will explain later in Section \ref{sec.FurtherRemarks} how such maps can easily be implemented in a computer.

In this case, the divisor associated to $\Phi_S$ via Proposition \ref{propFSplittingsProperties}(b) is the trivial divisor.  Thus there are no non-trivial $\Phi_S$-compatible ideals by Proposition \ref{propFSplittingsProperties}(b).
\end{example}

We our next goal is to recall Fedder's Lemma;  this will allow us to translate the problem of finding compatible ideals of $R = k[x_1, \dots, x_n]/I$ to finding compatible ideals on $S = k[x_1, \dots, x_n]$.   The point is that if $R = S/I$, then maps $\bar\phi : F^e_* R \to R$ come from maps $\phi : F^e_* S \to S$, which Fedder's Lemma \emph{precisely} identifies.  On the other hand, since $\Phi_S$ generates $\Hom_S(F^e_* S, S)$, we can write $\Phi_S(F^e_* z \cdot \bullet) = \phi(F^e_* \bullet)$.  Thus the choice of $\bar\phi$ is determined by the choice of a certain $z \in S$.  Roughly speaking, Fedder's Lemma says that the set of allowable $z$ are exactly $I^{[p^e]} : I$.

%Before doing this, we recall when one ideal is sent within another by $\phi$.

%\begin{lemma}\cite[Lemma 1.6]{FedderFPureRat} \label{lem.FedderPreLemma}
%Any ideal $J \subseteq S$ satisfies $\Phi_S(F^e_* J) \subseteq I$ if and only if $J \subseteq I^{[p^e]}$.
%\end{lemma}
%\begin{proof}
%Choose $f \in J$.  Since $F^e_* S$ is a free $S$-module with basis $\{ x_1^{\lambda_1} \dots x_n^{\lambda_n} \, |\, 0 \leq \lambda_i \leq p^e - 1 \}$, we can write
%\[ f = \sum_{{\bf 0} \leq {\bf \lambda} \leq {\bf p^e - 1}} f_{{\bf \lambda}}^{p^e} {\bf x}^{\bf \lambda} = \sum_{{\bf 0} \leq {\bf \lambda} \leq {\bf p^e - 1}} f_{{\bf \lambda}}.{\bf x}^{\lambda} \]
%where in the second sum, the action is via Frobenius (writing $f$ as a sum of basis elements).  Since $\Phi_S(F^e_* f) \in I$, it follows immediately that $f_{\bf{p^e - 1}} \in I$.  But for any monomial ${\bf x}^{\bf{\delta}}$, we have ${\bf x}^{\bf{\delta}} f \in J$ and so $\Phi_S(F^e_* {\bf x}^{\bf{\delta}} f) \in I$.  This immediately implies that each $f_{\bf{\lambda}} \in I$ or in other words, that $f \in I^{[p^e]}$.
%\end{proof}

\begin{lemma}[Fedder's Lemma] \cite[Lemma 1.6]{FedderFPureRat}
\label{lem.FeddersLemma}
Suppose that $S = k[x_1, \dots, x_n]$ for some perfect field $k$ and $R = S/I$ for some ideal $I \subsetneq S$ where $\rho : S \to R$ is the canonical surjection.  Then:
\begin{itemize}
\item[(a)] If $\bar\phi : F^e_* R \to R$ is any $R$-linear map, then there exists a $S$-linear map $\phi : F^e_* S \to S$ which is compatible with $I$ such that $\bar\phi = \phi/I$ (making the diagram commute as in Equation \ref{eq.CompatibleDiagram}).
\item[(b)] Given $\bar\phi$ and $\phi$ as in (a), then $\bar\phi = \phi/I$ is surjective if and only if $\phi$ is surjective at all points in a neighborhood of $V(I) \subseteq \Spec S$.  Furthermore, if $\phi(F^e_* \bullet) = \Phi_S(F^e_* z \cdot \bullet)$, then $\phi$ is surjective at a point $\bm \in \Spec S$ if and only if $z \notin \bm^{[p^e]}$.
\item[(c)]  An arbitrary map $\phi \in \Hom_S(F^e_* S, S)$ satisfies $\phi(F^e_* I) \subseteq I$ if and only if there exists $c \in I^{[p^e]} : I$ such that we can write $\phi(F^e_* \bullet) = \Phi_S(F^e_* c \cdot \bullet)$ where $\Phi_S$ is as in Example \ref{ex.PolyRing}.  Combining this with (a), we see that
\[
(F^e_* (I^{[p^e]} : I)) \cdot \Hom_S(F^e_* S, S)
\]
is exactly the set of elements of $\Hom_S(F^e_* S, S)$ which are compatible with $I$.
\item[(d)]  With the notation from (c), there exists an isomorphism:
\[
\Hom_R(F^e_* R, R) \cong \left( (F^e_* (I^{[p^e]} : I)) \cdot \Hom_{S}(F^e_* S, S) \right) \Big/ \left( (F^e_* I^{[p^e]}) \cdot \Hom_S(F^e_* S, S) \right).
\]
\end{itemize}
\end{lemma}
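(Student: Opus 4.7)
The plan is to exploit two structural facts: $F^e_* S$ is a free $S$-module with the explicit basis of Example \ref{ex.PolyRing}, so $\Hom_S(F^e_* S, S) = F^e_* S \cdot \Phi_S$ is free of rank one over $F^e_* S$, and this freeness makes $F^e_* S$ projective, allowing lifts through surjections. Throughout I identify $\phi \in \Hom_S(F^e_* S, S)$ with the element $F^e_* c$ such that $\phi(-) = \Phi_S(F^e_* c \cdot -)$, and use the explicit formula for $\Phi_S$: if $g = \sum_\alpha g_\alpha^{p^e} x^\alpha$ is the expansion in the $p^e$-power basis $\{x^\alpha\}_{0 \le \alpha_i < p^e}$ over $S$, then $\Phi_S(F^e_* g) = g_{(p^e-1,\dots,p^e-1)}$.

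For (a), compose $\bar\phi$ with the canonical surjection $F^e_* S \twoheadrightarrow F^e_* R$ to obtain an $S$-linear map $F^e_* S \to R$; projectivity of $F^e_* S$ lifts this along $S \twoheadrightarrow R$ to $\phi \colon F^e_* S \to S$, which automatically satisfies $\phi(F^e_* I) \subseteq I$ since its reduction modulo $I$ vanishes on $F^e_* I$. For the open criterion in (b), $\bar\phi$ is surjective iff $\phi(F^e_* S) + I = S$, equivalently $V(\phi(F^e_* S)) \cap V(I) = \emptyset$, which by openness of the surjective locus is the same as $\phi$ being surjective on some neighborhood of $V(I)$. For the local criterion with $\phi = F^e_* z \cdot \Phi_S$, surjectivity at $\bm$ reduces to $\Phi_S(F^e_* zS) \not\subseteq \bm$; the key fact is that $\Phi_S(F^e_* zS) \subseteq \bm$ iff $z \in \bm^{[p^e]}$, proved by evaluating on the translates $F^e_*(z x^{(p^e-1)-\alpha})$ to read off each basis coefficient $z_\alpha$ of $z$, and noting $z \in \bm^{[p^e]}$ iff every such coefficient lies in $\bm$.

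The heart of (c) and (d) is the following core lemma: for any ideal $J \subseteq S$, $\Phi_S(F^e_* J) \subseteq I$ iff $J \subseteq I^{[p^e]}$. The direction $(\Leftarrow)$ is immediate from $F^e_* S$-linearity of $\Phi_S$. The harder direction $(\Rightarrow)$ — the main obstacle in the whole argument — uses that $J$ is an ideal, so $j x^\beta \in J$ for every $j \in J$ and every $\beta \in \{0,\dots,p^e-1\}^n$, combined with the computation $\Phi_S(F^e_*(j x^{(p^e-1)-\beta})) = j_\beta$, showing that all $p^e$-basis coefficients of $j$ lie in $I$ and hence $j \in I^{[p^e]}$. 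Applying this lemma with $J = cI$ yields (c): $\phi(F^e_* I) \subseteq I$ iff $cI \subseteq I^{[p^e]}$ iff $c \in I^{[p^e]}:I$. Finally, (d) follows by combining (a) and (c): the assignment $\phi \mapsto \phi/I$ defines a surjection $\{\phi : \phi(F^e_* I) \subseteq I\} \twoheadrightarrow \Hom_R(F^e_* R, R)$ by (a); the domain equals $F^e_*(I^{[p^e]}:I) \cdot \Hom_S(F^e_* S, S)$ by (c); and its kernel consists of the $\phi$ with $\phi(F^e_* S) \subseteq I$, which by the core lemma applied to $J = cS$ equals $F^e_* I^{[p^e]} \cdot \Hom_S(F^e_* S, S)$.
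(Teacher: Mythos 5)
Your proof is correct, and it is essentially the standard argument of Fedder's original paper, which is exactly what this paper cites instead of reproving: freeness of $F^e_* S$ with the explicit generator $\Phi_S$ (giving the lift in (a) by projectivity), plus the key equivalence $\Phi_S(F^e_* J) \subseteq I \Leftrightarrow J \subseteq I^{[p^e]}$ obtained by reading off the $p^e$-th power coefficients via multiplication by the monomials $x^{(p^e-1)-\beta}$, from which (b), (c) and (d) follow as you indicate. Nothing further is needed.
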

\begin{proof}  The proof found in \cite[Lemma 1.6]{FedderFPureRat} is quite easy to read and so we will not repeat it here.
\end{proof}

We now state an easy corollary of Fedder's Lemma which shows that in fact we may reduce to the case of a Frobenius splitting on $S = k[x_1, \dots, x_n]$.

\begin{corollary}
\label{cor.SplittingsLift}
Using the notation from Lemma \ref{lem.FeddersLemma}(a), additionally suppose that $\bar\phi$ is surjective.  Then there exists a map $\psi : F^e_* S \to S$ such that
\begin{itemize}
\item[(i)]  $\psi$ is a Frobenius splitting.
\item[(ii)]  For every $\bar\phi$-compatible ideal $J \subseteq R$, the inverse image $\rho^{-1}(J) \subseteq S$ is compatibly \emph{split} by $\psi$ (although there may be new $\psi$-compatible ideals not coming from $\bar\phi$).
\end{itemize}
\end{corollary}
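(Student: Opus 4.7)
The plan is to use Fedder's Lemma to express both $\phi$ (a lift of $\bar\phi$) and the desired $\psi$ in the form $\Phi_S(F^e_* u \cdot \bullet)$, and then to choose $u$ carefully so that $\psi$ is a splitting while automatically inheriting the needed compatibilities from $\phi$. Concretely, I would first invoke Lemma \ref{lem.FeddersLemma}(a) to obtain $\phi : F^e_* S \to S$ with $\phi/I = \bar\phi$, and then apply (c) to write $\phi(F^e_*\bullet) = \Phi_S(F^e_* c \cdot \bullet)$ for some $c \in I^{[p^e]} : I$.

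The key observation is that compatibility transfers upward: if $J \subseteq R$ is $\bar\phi$-compatible, then a short diagram-chase in (\ref{eq.CompatibleDiagram}) shows $K_J := \rho^{-1}(J)$ is $\phi$-compatible in $S$, and hence $c \in K_J^{[p^e]} : K_J$ by Fedder (c). It follows that any element of the form $u := cd + h$ with $d \in S$ and $h \in I^{[p^e]}$ also lies in $K_J^{[p^e]} : K_J$ for every $\bar\phi$-compatible $J$: the summand $cd$ is absorbed because $K_J^{[p^e]}$ is an $S$-ideal, and the summand $h$ lies in $I^{[p^e]} \subseteq K_J^{[p^e]}$ since $I \subseteq K_J$. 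So any $u$ of this shape produces a $\psi = \Phi_S(F^e_* u \cdot \bullet)$ satisfying condition (ii).

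It therefore remains to choose $d$ and $h$ so that $\Phi_S(F^e_* u) = 1$, which is condition (i). Since $\bar\phi$ is surjective, I can lift some $\bar d \in R$ with $\bar\phi(F^e_* \bar d) = 1$ to an element $d \in S$, obtaining $\Phi_S(F^e_* cd) = \phi(F^e_* d) = 1 + i$ for some $i \in I$. To cancel this error I would use that $\Phi_S$ carries $F^e_* I^{[p^e]}$ onto $I$: for $f \in I$ and $g \in S$ one has $\Phi_S(F^e_* f^{p^e} g) = f \cdot \Phi_S(F^e_* g)$, and since $\Phi_S$ is surjective on $S$ one can realize any element of $I$ this way. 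Choosing $h \in I^{[p^e]}$ with $\Phi_S(F^e_* h) = -i$, the element $u = cd + h$ gives $\Phi_S(F^e_* u) = 1$, and the corresponding $\psi$ is the required splitting.

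The main conceptual point --- and the only step requiring any idea beyond unwinding definitions --- is recognizing that the freedom from Lemma \ref{lem.FeddersLemma}(d) to modify $c$ by an element of $I^{[p^e]}$ without changing $\bar\phi$ is precisely what is needed to promote the surjectivity of $\bar\phi$ on $R$ to a genuine splitting identity $\psi(F^e_* 1) = 1$ in $S$.
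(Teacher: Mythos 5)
Your proof is correct, and it reaches the splitting by a more explicit route than the paper does. Both arguments search for $\psi$ inside the same set of maps, namely those of the form $\phi(F^e_* b \cdot \bullet) + \Phi_S(F^e_* f \cdot \bullet)$ with $f \in I^{[p^e]}$, and both deduce (ii) from the fact that such maps preserve every $\rho^{-1}(J)$ (you check this via the colon-ideal arithmetic $cd + h \in \rho^{-1}(J)^{[p^e]} : \rho^{-1}(J)$, the paper via the description of the induced map on $R$). Where you differ is in producing a splitting: the paper forms the submodule $W = \langle \phi \rangle_{F^e_* S} + (F^e_* I^{[p^e]}) \cdot \Hom_S(F^e_* S, S)$ and proves the evaluation map $\Gamma(\psi) = \psi(F^e_* 1)$ is surjective by localizing at every prime and invoking Fedder's Lemma (b) near $V(I)$; you instead exploit that surjectivity of $\bar\phi$ already gives a \emph{global} element $\bar d$ with $\bar\phi(F^e_* \bar d) = 1$, lift it, and kill the error $i \in I$ by an explicit $h \in I^{[p^e]}$ using the projection formula $\Phi_S(F^e_* f^{p^e} g) = f\,\Phi_S(F^e_* g)$ and the surjectivity of $\Phi_S$. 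Your version is more elementary and avoids the local-global step entirely, at the cost of leaning on the freeness of $\Hom_S(F^e_* S, S)$ with explicit generator $\Phi_S$ (which is exactly the setting of Fedder's Lemma here, so nothing is lost); the paper's module-theoretic formulation is the one that would survive in settings where no global generator is available. One small caveat about your closing sentence: multiplying $c$ by $d$ does change the induced map on $R$ (it becomes $\bar\phi(F^e_* \bar d \cdot \bullet)$ rather than $\bar\phi$), so it is not quite the "modify without changing $\bar\phi$" freedom of Lemma \ref{lem.FeddersLemma}(d) alone that you are using; this does not affect the proof, since the corollary only asks that $\rho^{-1}(J)$ be $\psi$-compatible, which you verified directly.
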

\begin{proof}
For simplicity, we use $M$ to denote the $F^e_*S$-module $\Hom_S(F^e_*S, S)$.  Fix any $\phi$ as in Lemma \ref{lem.FeddersLemma}(a) and suppose that $c \in S$ is such that $\phi(F^e_* c) \notin I$, such a $c$ exists by the surjectivity of $\bar\phi$.  Now consider the $F^e_* S$-submodule $W \subseteq M$ generated by $(F^e_* I^{[p^e]}) \cdot M$ and $\phi$, in other words
\[
W := \langle \phi \rangle_{F^e_* S} + (F^e_* I^{[p^e]}) \cdot M.
\]
Note that the formation of $W$ commutes with localization.  Also note that for any $\psi(F^e_* \bullet) = \phi(F^e_* b \cdot \bullet) + \Phi_S(F^e_* f \cdot \bullet) \in W$, the induced map $\psi/I =: \bar\psi$ on $R = S/I$ simply coincides with $\bar\phi(F^e_* \bar{b} \cdot \bullet)$

Consider now the map $\Gamma : W \to S$ defined by the rule $\Gamma(\psi) = \psi(F^e_* 1)$.  We will prove that $\Gamma$ is surjective and thus that $1 \in \Image(\Gamma)$.
It is enough to prove the statement locally at every prime $Q \in \Spec S$.  There are two cases, either $Q \in V(I) \subseteq \Spec S$ or not.
\begin{itemize}
\item[(a)] If $Q \in V(I)$, then after localizing at $Q$, we notice that $\phi_Q : F^e_* S_Q \to S_Q$ has $1$ in its image, say $\phi_Q(a) = 1$.  It follows that the function $\gamma(F^e_* \bullet) = \phi_Q(F^e_* a \cdot \bullet)$ sends $1$ to $1$ and so $\Gamma_Q(\gamma) = 1$.
\item[(b)]  If $Q \notin V(I)$, then it is even easier since then $W_Q = \Hom_{S_Q}(F^e_* S_Q, S_Q)$ and claim follows immediately.
\end{itemize}
Since now $1 \in \Image(\Gamma)$, there exists $\psi \in \Hom_S(F^e_* S, S)$ such that $\psi(F^e_* 1) = 1$.

Suppose finally that $J \subseteq R$ is $\bar\phi$-compatible.  We must show that $J' = \rho^{-1}(J)$ is $\psi$-compatible.  We notice that $\psi$ induces $\psi/I : F^e_* R \to R$ and furthermore, $\psi/I(F^e_* \bullet) = \bar\phi(F^e_* b \cdot \bullet)$ for some $b \in R$ as observed initially.  It is enough to show that $\psi/I(F^e_* J) \subseteq J$ but this is obvious by the above characterization of $\psi/I$.
\end{proof}

As a final remark in this section, we mention a common source of compatibly Frobenius split ideals.

\begin{remark}
Suppose that $X$ is a projective Frobenius split variety projectively normally embedded into $\bP^n$.  In that case the affine cone over $X$ is Frobenius split and so yields a Frobenius split ring (the projective normality then guarantees that it is a quotient of the ring $S = k[x_0, \dots, x_n]$).  This is a particularly common way of producing Frobenius split rings.  In this case, the same Fedder-type Lemma works, and one can compute the compatibly Frobenius split subvarieties of $X$ by computing the compatible ideals of $\Spec S$.
\end{remark}

%%%%%%%%%%%%%%%%%%%%%%%%%%%%%%%%%%%%%%%%%%%%%%%%%%%%%
\section{The statement of the algorithm}
\label{sec.AlgStatement}
%%%%%%%%%%%%%%%%%%%%%%%%%%%%%%%%%%%%%%%%%%%%%%%%%%%%%
Suppose that $S := k[x_1, \dots, x_n]$ where $k$ is a perfect field of characteristic $p > 0$.

Because of Fedder's Lemma \ref{lem.FeddersLemma} and Corollary \ref{cor.SplittingsLift}, we additionally suppose that $\phi : F^e_* S \to S$ is a \emph{surjective} $S$-linear map (for example a Frobenius splitting) that is compatible with $I$ (and henceforth  $I$ will not play much of a role).  In a later section, we will handle the non-surjective case.
The advantage with working with $S$ instead of $R$ is that $F^e_* S$ is a free $S$-module, and so specifying $\phi$ is the same as specifying where a basis is sent.
Finally we fix $z \in S$ such that $\Phi_S(F^e_* z \cdot \bullet) = \phi(F^e_* \bullet)$ where $\Phi_S$ is as in Example \ref{ex.PolyRing}.

With the notation above, and given any prime ideal $Q \subseteq S$ (in practice containing $I$) the recursive algorithm described below produces a list of all prime $\phi$-compatible ideals which properly contain $Q$.  In order to do this, the algorithm finds the smallest $\phi$-compatible ideal properly containing $Q$.  The initial input $Q$ to the algorithm can be the zero ideal or each of the minimal primes of $I$.

Of course, the plan of the algorithm is to apply Proposition \ref{propFSplittingsProperties}(e) to the ring $S/Q$, see Step (3) below.  Therefore we need to define the ideal $\ba$ which all the Frobenius split subvarieties contain.  Identifying the singular locus is straightforward, see step (1) below, and identifying the locus corresponding to the divisor $D_{\phi}$ on $R$ is accomplished in (2).  Step (4) is then recursive where we replace $Q$ by larger ideals.

Here are the steps of the algorithm (later we will describe an algorithm which works for non-surjective $\phi$).
\vskip 6pt
\begin{itemize}
\item[(1)]  Find an ideal $J \subseteq S$ such that $\Spec(S/Q) \setminus (V(J) \cap \Spec(S/Q))$ is a regular non-empty scheme.  For example $J$ could define the singular locus of $S/Q$.
\item[(2)]  Compute $B := \Ann_S \left( (Q^{[p^e]} : Q) / ( \langle z \rangle + Q^{[p^e]}) \right) = \left( ( \langle z \rangle + Q^{[p^e]}) :  (Q^{[p^e]} : Q) \right).$  This ideal in not contained in $Q$ as long as $\phi/Q$ is non-zero.
\item[(3)]  Find the first $t$ such that $\phi^{t}( F^{te}_* (JB + Q) ) = \phi^{t+1}(F^{(t+1)e}_* (JB + Q) ) = C$ (as in Proposition \ref{propFSplittingsProperties}(e)).  We will show that the ideal $C$ properly contains $Q$.
\item[(4)]  Let $Q_1, \dots, Q_m$ be the minimal primes of $C$, add them to the list of compatible ideals, and repeat the algorithm with $Q_i = Q$.  Every $\phi$-compatible ideal, properly containing $Q$, that is minimal with respect to inclusion, appears in the list of the $Q_i$.
\end{itemize}

%Roughly speaking, steps (1) and (2) identify a proper closed subscheme of $V(Q) \subseteq \Spec S$ where all the compatibly Frobenius split subvarieties must lie.  This is only an approximation and in general will be too big.  Step (3) then uses that approximation to find exactly the union of all those compatible subvarieties contained within $V(Q)$.  Step (4) is the inductive step.

%\vskip 11pt
%In a later section, we will describe an algorithm that works in the case that $\phi$ is not surjective.
\vskip 6pt
In Section \ref{sec.FurtherRemarks} below, we will discuss issues of complexity, especially with regards to the integer $t$ needed in Step (3),

%%%%%%%%%%%%%%%%%%%%%%%%%%%%%%%%%%%%%%%%%%%%%%%%%%%%%%
\section{The proof that the algorithm works}
%%%%%%%%%%%%%%%%%%%%%%%%%%%%%%%%%%%%%%%%%%%%%%%%%%%%%%

In this section, we prove the individual claims made in the algorithm.
Step (1) has no associated claims so we move on to Step (2) which is the only technical point in the argument.  Let us briefly explain the difficulty.  While the divisor associated to $\phi$ is easy to identify on $\Spec S$ (it is merely $\Div(z)$), it is harder to identify on $S/Q$ (which need not even be normal), indeed, this divisor has no transparent relation to $\Div(z)$ except in special cases.  Step (2)  identifies an ideal whose support contains $D_{\phi/Q}$ at least on the locus where $S/Q$ is normal.  To do this we utilize Fedder's Lemma \ref{lem.FeddersLemma}.

Let us informally explain where the ideal $B$ comes from.  Indeed, $(Q^{[p^e]} : Q)$ corresponds via Fedder's Lemma to the set of all elements of $\Hom_S(F^e_* S, S)$ which are compatible with $Q$.  On the other hand, $\langle z \rangle + Q^{[p^e]}$ corresponds to the submodule of $\Hom_S(F^e_* S, S)$ generated by all $\psi$ which are both compatible with $Q$ and such that $\psi/Q = \phi/Q$.  We then observe that
\[
\left( (\langle z \rangle + Q^{[p^e]}) :  (Q^{[p^e]} : Q) \right)
\]
simply defines the locus where these two modules are distinct.  On the regular locus of $R/Q$, this is simply the defining equation of $D_{\phi/Q}$.

Now we carefully prove a generalization of the claim from step (2).

\begin{lemma}
\label{lem.ProofOf2}
For any $\phi : F^e_* S \to S$ compatible with $Q$, the ideal $B$ constructed in (2) is not contained in $Q$ as long as $\phi(F^e_* S)$ is not contained in $Q$ (we do not require that $\phi$ is surjective).
\end{lemma}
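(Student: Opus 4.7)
My plan is to reformulate the claim as the vanishing of a certain $S$-module after localization at $Q$, and then reduce the problem to the fact that $K := S_Q/QS_Q$ is a field.

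First I would note that by Fedder's Lemma~\ref{lem.FeddersLemma}(c), the hypothesis that $\phi$ is compatible with $Q$ forces $z \in Q^{[p^e]} : Q$, so in particular $\langle z \rangle + Q^{[p^e]} \subseteq Q^{[p^e]} : Q$ and the quotient
\[
M := (Q^{[p^e]}:Q)\big/(\langle z\rangle + Q^{[p^e]})
\]
is a well-defined finitely generated $S$-module with $\Ann_S M = B$. Since annihilators of finitely generated modules commute with localization, proving $B \not\subseteq Q$ is equivalent to showing $M_Q = 0$.

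Next, I would invoke Fedder's Lemma~\ref{lem.FeddersLemma}(d) to identify
\[
\Hom_R(F^e_*R, R) \cong F^e_*\big((Q^{[p^e]}:Q)/Q^{[p^e]}\big)
\]
as $F^e_*S$-modules, under which the induced map $\bar\phi := \phi/Q$ corresponds to the class of $z$. The $F^e_*R$-submodule generated by $\bar\phi$ then corresponds to the image of $\langle z\rangle + Q^{[p^e]}$, so quotienting gives a canonical isomorphism
\[
\Hom_R(F^e_*R, R)\big/\bigl(F^e_*R\cdot\bar\phi\bigr) \cong F^e_*M.
\]

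Finally I would localize the entire previous paragraph at $Q$. Because $R = S/Q$ is a domain, $K = S_Q/QS_Q$ is the fraction field of $R$; after verifying that Fedder's isomorphism commutes with localization (using that $F^e_*R$ is finitely presented), the previous display descends to
\[
\Hom_K(F^e_*K, K)\big/\bigl(F^e_*K\cdot\bar\phi_Q\bigr) \cong F^e_*M_Q.
\]
The hypothesis $\phi(F^e_*S) \not\subseteq Q$ is exactly the statement that $\bar\phi\neq 0$, and since $R$ injects into $K$ this implies $\bar\phi_Q\neq 0$. Because $K$ is a field (hence Gorenstein), Proposition~\ref{propFSplittingsProperties}(a) forces $\Hom_K(F^e_*K, K)$ to be a rank-one free $F^e_*K$-module, and therefore the nonzero element $\bar\phi_Q$ generates it. The displayed quotient therefore vanishes, giving $M_Q = 0$ and hence $B \not\subseteq Q$. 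The main technical point to verify carefully is the compatibility of Fedder's identification with localization at $Q$; once that is in place, the ``field case'' forces the conclusion essentially for free.
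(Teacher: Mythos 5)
Your argument is correct and is essentially the paper's proof in localized form: both rest on Fedder's identification of $\Hom_{S/Q}(F^e_*(S/Q), S/Q)$ with $F^e_*\bigl((Q^{[p^e]}:Q)/Q^{[p^e]}\bigr)$ together with the fact that this module has generic rank one over the domain $S/Q$, so the nonzero element $\phi/Q$ generates it at the generic point. The paper phrases this by clearing denominators to produce an explicit $d \in B \setminus Q$, while you phrase it as the vanishing of $M_Q$; the content is the same.
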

\begin{proof}
Consider the induced $S/Q$-linear map $\phi/Q : F^e_* (S/Q) \to S/Q$.  Notice that $\Image(\phi/Q) = \phi(F^e_* S)/Q \neq 0$ since $Q$ does not contain $\phi(F^e_* S)$.  Therefore, the map $\phi/Q : F^e_*(S/Q) \to S/Q$ is not the zero map.  We will use $N$ to denote the $F^e_*S$-module $\Hom_S(F^e_* S, S)$ and use $M$ to denote the $F^e_* (S/Q)$-module $\Hom_{S/Q}(F^e_* (S/Q), S/Q)$.  Consider the cyclic $F^e_* (S/Q)$-submodule of $M$
\[
K := \langle \phi/Q \rangle \subseteq M := \Hom_{S/Q}(F^e_* (S/Q), S/Q).
\]
Now,
$M$ is a $F^e_* (S/Q)$-module of \emph{generic} rank 1, and so there exists an element $F^e_* d \in F^e_* S \setminus F^e_* Q$ such that
\[
(F^e_* d) \cdot M \subseteq K.
\]
We will show that $d \in F^e_* B$ which will imply that $B$ is not contained in $Q$.

We use Fedder's Lemma \ref{lem.FeddersLemma} to see that%\cite[Lemma 1.6]{FedderFPureRat}, that
\[
\Hom_{S/Q}(F^e_* (S/Q), S/Q) = M \cong \left( F^e_* (Q^{[p^e]} : Q) \cdot N\right)  \Big/ \left( (F^e_* Q^{[p^e]}) \cdot N\right)
\]
Furthermore, under this identification, the submodule $K$ corresponds to
\[
\left( F^e_* (\langle z \rangle + Q^{[p^e]}) \cdot N \right) \Big/ \left( (F^e_* Q^{[p^e]}) \cdot N \right).
  \]
It follows that $d$ multiplies $F^e_* (Q^{[p^e]} : Q)$ into $F^e_* (\langle z \rangle + Q^{[p^e]})$ and so $d \in B$ as desired.
\end{proof}

The remaining claims from the algorithm are easy and we now prove the claim in (3).

\begin{lemma}
\label{lemmaStep3}
The ideal $C$ constructed in (3) exists and properly contains $Q$.
\end{lemma}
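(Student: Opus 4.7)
The plan is to verify two things: that $JB + Q$ strictly contains $Q$, and that the sequence of ideals $\phi^t(F^{te}_*(JB+Q))$ in $S$ forms an ascending chain. Once those are in place, existence of the stabilized value $C$ follows from the Noetherian property of $S$, and $C \supsetneq Q$ is automatic since $C \supseteq JB + Q$.

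For the first claim I would argue $J \not\subseteq Q$ (otherwise the regular non-empty scheme required in step~(1) would be empty) and $B \not\subseteq Q$, which is the content of Lemma \ref{lem.ProofOf2} (whose hypothesis $\phi(F^e_* S) \not\subseteq Q$ holds because $\phi$ is surjective and $Q$ is proper). Since $Q$ is prime, it follows that $JB \not\subseteq Q$ and hence $JB + Q \supsetneq Q$. For the second claim, the key input is surjectivity of $\phi$: fixing $c \in S$ with $\phi(F^e_* c) = 1$, for any $a \in JB+Q$ one has $a^{p^e} \in JB+Q$, so $c a^{p^e} \in JB+Q$, and $\phi(F^e_*(c a^{p^e})) = a\cdot \phi(F^e_* c) = a$. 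Thus $JB + Q \subseteq \phi(F^e_*(JB+Q))$, and iterating the same argument with $\phi$ replaced by $\phi^t$ yields the ascending chain, which stabilizes by the Noetherian property of $S$.

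I do not expect a serious obstacle here; the genuine content was already discharged in Lemma \ref{lem.ProofOf2}. As a side remark useful for the rest of the algorithm, one can identify the stable value $C$ by descending modulo $Q$: since $\phi$ is surjective one similarly shows $\phi(F^e_* Q) = Q$, so reduction modulo $Q$ identifies the $S$-chain with the chain $(\phi/Q)^t(F^{te}_*\overline{JB})$ in the domain $S/Q$. The image $\overline{JB}$ is nonzero and, by the defining properties of $J$ and $B$ recalled in the preamble to this section, vanishes on $(\Supp D_{\phi/Q}) \cup \Sing(\Spec S/Q)$; Proposition \ref{propFSplittingsProperties}(e) then identifies $C/Q$ with the big test ideal of $S/Q$.
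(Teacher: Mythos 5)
Your proof is correct and follows essentially the same route as the paper's: by steps (1) and (2) (the latter via Lemma \ref{lem.ProofOf2}) one has $JB \not\subseteq Q$, hence $JB+Q \supsetneq Q$, and the chain $\phi^t(F^{te}_*(JB+Q))$ ascends and stabilizes, giving $C \supseteq JB+Q \supsetneq Q$. The only cosmetic difference is that you inline the ascending-chain/stabilization argument directly in $S$ using a $c$ with $\phi(F^e_* c)=1$, whereas the paper simply cites Proposition \ref{propFSplittingsProperties}(e), whose proof is the very same computation.
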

\begin{proof}
 By (1) and (2), the ideal $JB$ is not contained in $Q$, thus $JB + Q$ properly contains $Q$.  The result then follows since this chain of ideals stabilizes by Proposition \ref{propFSplittingsProperties}(e).
\end{proof}

Finally, we prove the claim in step (4).

\begin{lemma}
With the notation of step (4), if $P$ is any prime $\phi$-compatible ideal, properly containing $Q$, then $P$ contains $Q_i$ for some $1 \leq i \leq m$.
\end{lemma}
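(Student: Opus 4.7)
The plan is to reduce the claim to the assertion that $C \subseteq P$, then pass to the quotient $S/Q$ and invoke Proposition \ref{propFSplittingsProperties}(e) to identify the image of $C$ in $S/Q$ with the unique smallest non-zero $\phi/Q$-compatible ideal. Once we have $C \subseteq P$, primality of $P$ immediately gives $Q_i \subseteq P$ for some $i$ since $\sqrt{C} = \bigcap_{i=1}^m Q_i$.

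First I would pass to the domain $R := S/Q$ (which is a domain because $Q$ is prime) and set $\bar P := P/Q$, a non-zero prime ideal that is $\phi/Q$-compatible because $P$ is $\phi$-compatible. Let $\bar\tau \subseteq R$ denote the unique smallest non-zero $\phi/Q$-compatible ideal guaranteed by Proposition \ref{propFSplittingsProperties}(e). Since $\bar P$ is a non-zero $\phi/Q$-compatible ideal, we have $\bar\tau \subseteq \bar P$, equivalently $\pi^{-1}(\bar\tau) \subseteq P$, where $\pi : S \to R$ is the canonical surjection.

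The main step is to identify $C + Q$ with $\pi^{-1}(\bar\tau)$. To do this I would verify that $\overline{JB} := (JB + Q)/Q$ is a non-zero ideal of $R$ vanishing on $(\Supp D_{\phi/Q}) \cup \Sing(\Spec R)$, which are exactly the hypotheses required by Proposition \ref{propFSplittingsProperties}(e) to produce $\bar\tau$. Non-vanishing is immediate: by Step (1) of the algorithm $J \not\subseteq Q$, by Lemma \ref{lem.ProofOf2} $B \not\subseteq Q$, and $Q$ is prime, so $JB \not\subseteq Q$. For the vanishing, on the open locus $\Spec R \setminus V(J + Q)$ the ring $R$ is regular (hence normal) and, by the construction of $B$ together with the discussion preceding Lemma \ref{lem.ProofOf2}, $V(B + Q)$ contains $\Supp D_{\phi/Q}$ there. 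Thus $V(JB + Q) \supseteq V(J + Q) \cup \Supp D_{\phi/Q} \supseteq \Sing(\Spec R) \cup \Supp D_{\phi/Q}$, as needed.

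Applying Proposition \ref{propFSplittingsProperties}(e) to $R$ and the ideal $\overline{JB}$, the chain $(\phi/Q)^s(F^{se}_* \overline{JB})$ stabilizes to $\bar\tau$. Because $\phi$ is compatible with $Q$, for every element $a \in S$ the image of $\phi^s(F^{se}_* a)$ in $R$ equals $(\phi/Q)^s(F^{se}_* \bar a)$. Hence the image of $\phi^s(F^{se}_*(JB + Q))$ in $R$ equals $(\phi/Q)^s(F^{se}_* \overline{JB})$. Taking $s = t$ as in Step (3), we obtain $(C + Q)/Q = \bar\tau$, i.e.\ $C + Q = \pi^{-1}(\bar\tau) \subseteq P$. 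Therefore $C \subseteq P$, and since $Q_1, \dots, Q_m$ are the minimal primes of $C$ we have $\bigcap_i Q_i = \sqrt{C} \subseteq P$; primality of $P$ then forces $Q_i \subseteq P$ for some $i$, completing the proof. The only delicate point is verifying that $\overline{JB}$ vanishes on $\Supp D_{\phi/Q} \cup \Sing(\Spec R)$, which is really the content of how $B$ was designed in Step (2).
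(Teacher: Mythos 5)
Your argument is correct and is essentially the paper's own proof, which simply cites Proposition \ref{propFSplittingsProperties}(e): you have just spelled out the implicit verifications, namely that $(JB+Q)/Q$ is a non-zero ideal vanishing on $(\Supp D_{\phi/Q}) \cup \Sing(\Spec S/Q)$ (using Step (1), Lemma \ref{lem.ProofOf2}, and the discussion of $B$ preceding it) and that the stabilized ideal $C$ reduces mod $Q$ to the smallest non-zero $\phi/Q$-compatible ideal, whence $C \subseteq P$ and primality of $P$ gives $Q_i \subseteq P$ for some $i$. No gaps; this is the intended reasoning, merely made explicit.
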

\begin{proof}
This follows immediately from Proposition \ref{propFSplittingsProperties}(e).
%We first claim that $P$ must contain $JB + Q$.  It follows by construction that on $V(Q) \setminus V(JB + Q)$, $\Spec S/Q$ is regular and the divisor associated to $\phi$ (on $\Spec S/Q$, as in Proposition \ref{propFSplittingsProperties}(b)) is trivial.  The claim follows from Proposition \ref{propFSplittingsProperties}(b).
%However, once $P$ contains $JB + Q$, we see that for $n \gg 0$, $P \supseteq \phi^n(F^{ne}_* P) \supseteq \phi^n(F^{ne}_* (JB + Q)) = C$.  Thus $P$ contains $C$ and so contains at least one of the minimal primes of $C$ as desired.
\end{proof}

%%%%%%%%%%%%%%%%%%%%%%%%%%%%%%%%%%%%%%%%%%%%%%%%%%
\section{A more general algorithm}
%%%%%%%%%%%%%%%%%%%%%%%%%%%%%%%%%%%%%%%%%%%%%%%%%%
\label{sec.GeneralizedAlg}

In this section we describe a slight modification of this algorithm which produces something of interest even if $\phi$ is not surjective.
Fix $S$ and  $\phi : F^e_* S \to S$ an $S$-linear map as in section \ref{sec.AlgStatement}, but do not assume that $\phi$ is surjective.  Consider the ideal of $S$, $K = \sqrt{ \Image(\phi) }$.  With a minor change to the algorithm presented above, we can compute all the prime $\phi$-compatible ideals of $J$ not containing $K$.  Furthermore, we can identify all the $\phi$-compatible primes, see Remark \ref{rem.FindingBoringPrimes}.

Obviously there is another way to do this too, set $V(K) \subseteq \Spec S$ to be the vanishing locus of $K$.  Then one can cover $\Spec R \setminus V(K)$ with affine charts, charts where $\phi$ is surjective, and compute the compatible ideals on each of those charts.  However, a minor change of our original algorithm allows us to run it without this complication.

The input for each stage of our algorithm is the same as before and the steps are quite similar (listed below):
\vskip 6pt
\begin{itemize}
\item[(1*)]  Find an ideal $J \subseteq S$ such that $\Spec(S/Q) \setminus (V(J) \cap \Spec(S/Q))$ is a regular non-empty scheme.  For example $J$ could define the singular locus of $S/Q$.
\item[(2*)]  Compute $B := \Ann_S \left( (Q^{[p^e]} : Q) / ( \langle z \rangle + Q^{[p^e]}) \right) = \left( \langle z \rangle + Q^{[p^e]}) :  (Q^{[p^e]} : Q) \right).$  This ideal in not contained in $Q$ as long as $Q$ does not contain $K$.
\item[(3*)]  Define an ascending chain of ideals of $S$ recursively as follows:  $C_0 = JB + Q$ and $C_t = \phi(F^e_* C_{t-1}) + C_{t-1}$.  Find the first $t$ such that $C_t = C_{t+1}$ and set $C = C_t$.  The ideal $C$ properly contains $Q$.
\item[(4*)]  Let $Q_1, \dots, Q_m$ be the minimal primes of $C$ which do not contain $K$.  Then repeat the algorithm with $Q_i = Q$ (all of the $Q_i$ will be $\phi$-compatible ideals so should be added to the list of valid outputs).  Every $\phi$-compatible ideal, properly containing $Q$ but not $K$, that is minimal with respect to inclusion, appears in the list of the $Q_i$.
\end{itemize}

\vskip 12pt
Running this recursively will produce all $\phi$-compatible primes not containing $K$.

Now we prove that the algorithm is correct.  Again property (1*) has no associated claims.
Property (2*)'s proof is already contained in the proof of Lemma \ref{lem.ProofOf2}.
We now prove that the assertions in properties (3*) and (4*) hold.

\begin{lemma}
 The ideal $C$ defined in (3*) properly contains $Q$.
\end{lemma}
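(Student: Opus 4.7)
The plan is to show two things: first, the chain $C_0 \subseteq C_1 \subseteq C_2 \subseteq \cdots$ stabilizes (so that $C$ is well-defined), and second, that $C_0$ already strictly contains $Q$, which forces $C \supsetneq Q$. The stabilization is immediate because the chain is ascending by construction and $S$ is Noetherian; no analogue of Proposition \ref{propFSplittingsProperties}(e) is needed here (in contrast with step (3) of the original algorithm, we are taking the cumulative sum at each step, so Noetherianity alone suffices).

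The real content is the strict containment $C_0 = JB + Q \supsetneq Q$, which amounts to showing $JB \not\subseteq Q$. Since $Q$ is prime (a hypothesis that propagates through the recursion, since we only feed in minimal primes of earlier $C$'s), it is enough to verify that $J \not\subseteq Q$ and $B \not\subseteq Q$ separately.

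For $J \not\subseteq Q$: if we had $J \subseteq Q$, then $V(J) \supseteq V(Q) = \Spec(S/Q)$, so the open set $\Spec(S/Q) \setminus (V(J) \cap \Spec(S/Q))$ described in (1*) would be empty, contradicting the non-emptiness requirement in that step.

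For $B \not\subseteq Q$: this is precisely what Lemma \ref{lem.ProofOf2} gives, since its hypothesis is only that $\phi$ is compatible with $Q$ and that $\phi(F^e_* S) \not\subseteq Q$ (surjectivity of $\phi$ is not required). In our setting, $K = \sqrt{\Image \phi} \not\subseteq Q$ by assumption, and since $Q$ is prime, hence radical, this is equivalent to $\Image \phi = \phi(F^e_* S) \not\subseteq Q$. So Lemma \ref{lem.ProofOf2} applies verbatim. Assembling these two facts with primality of $Q$ yields $JB \not\subseteq Q$, and therefore $Q \subsetneq JB + Q = C_0 \subseteq C$. I do not anticipate any real obstacle here; the only subtle point is remembering that Lemma \ref{lem.ProofOf2} was stated without requiring surjectivity, which is exactly what lets the same argument carry over to this more general setting.
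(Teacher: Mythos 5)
Your proof is correct and follows essentially the same route as the paper's: the paper also notes the chain is ascending (hence stabilizes by Noetherianity) and that $C \supsetneq Q$ because $C_0 = JB+Q \supsetneq Q$, which rests on $J \not\subseteq Q$ from (1*) and $B \not\subseteq Q$ from Lemma \ref{lem.ProofOf2} together with primality of $Q$. You have simply spelled out the details (including the observation that $K \not\subseteq Q$ gives $\phi(F^e_* S) \not\subseteq Q$ since $Q$ is radical) that the paper leaves implicit.
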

\begin{proof}
This chain also is ascending and is thus eventually constant.  It also properly contains $Q$ since $C_0$ does.
\end{proof}

\begin{lemma}
 With the notation of step (4*), if $P$ is any prime $\phi$-compatible ideal, properly containing $Q$, then $P$ contains $Q_i$ for some $1 \leq i \leq m$.
\end{lemma}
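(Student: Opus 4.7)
The plan is to establish that every such $P$ (which we implicitly restrict to not contain $K$, in keeping with the scope of step (4*)) contains the stabilized ideal $C$ itself. Once that is in hand, since $P$ is prime, $P$ must contain some minimal prime of $C$; since that minimal prime lies inside $P$ and $P \not\supseteq K$, the minimal prime also does not contain $K$, and hence equals one of the listed $Q_i$.

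First I would show $P \supseteq C_0 = JB + Q$. Since $Q \subseteq P$ is given, this reduces to $JB \subseteq P$. Because $Q$ arrives as input from step (4*) (or is the initial ideal), we have $Q \not\supseteq K$, so the induced map $\phi/Q : F^e_*(S/Q) \to S/Q$ is non-zero, and the divisor-support statement in Proposition \ref{propFSplittingsProperties}(b) applies to $\phi/Q$ on $\Spec(S/Q)$: the $\phi/Q$-compatible prime $P/Q$ satisfies $V(P/Q) \subseteq \Supp(D_{\phi/Q}) \cup \Sing(\Spec S/Q)$. By the choice of $J$ in step (1*), $\Sing(\Spec S/Q) \subseteq V(J/Q)$, and by the construction of $B$ via Fedder's Lemma \ref{lem.FeddersLemma} (discussed below), $V(B/Q)$ contains $\Supp(D_{\phi/Q})$ on the regular locus of $S/Q$. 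Combining these yields $V(P/Q) \subseteq V((JB+Q)/Q)$, and since $P/Q$ is prime (hence radical), this forces $JB + Q \subseteq P$.

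Next I would argue by induction on $t$ that $C_t \subseteq P$ for every $t$: if $C_{t-1} \subseteq P$, the $\phi$-compatibility of $P$ gives $\phi(F^e_* C_{t-1}) \subseteq \phi(F^e_* P) \subseteq P$, so $C_t = \phi(F^e_* C_{t-1}) + C_{t-1} \subseteq P$. Taking $t$ large enough for the chain to stabilize at $C$ gives $C \subseteq P$, completing the reduction set up in the opening paragraph.

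The main obstacle is the verification that $V(B/Q) \supseteq \Supp(D_{\phi/Q})$ on the regular locus of $S/Q$. This has to be extracted from Fedder's Lemma \ref{lem.FeddersLemma}(d), which presents $\Hom_{S/Q}(F^e_*(S/Q), S/Q)$ as the quotient of $F^e_*(Q^{[p^e]} : Q) \cdot \Hom_S(F^e_* S, S)$ by $F^e_*(Q^{[p^e]}) \cdot \Hom_S(F^e_* S, S)$, and identifies the cyclic $F^e_*(S/Q)$-submodule generated by $\phi/Q$ as the image of $F^e_*(\langle z \rangle + Q^{[p^e]}) \cdot \Hom_S(F^e_* S, S)$. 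The ideal $B$ is defined to annihilate the resulting cokernel. On the normal locus of $S/Q$, where $\Hom_{S/Q}(F^e_*(S/Q), S/Q) \cong F^e_* \omega_{S/Q}^{(1-p^e)}$ is locally free of rank one, this cokernel becomes the structure sheaf of the effective divisor $D_{\phi/Q}$, so $V(B/Q)$ and $\Supp(D_{\phi/Q})$ agree there, which is exactly what is needed.
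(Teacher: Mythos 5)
Your proof is correct, and its skeleton is the same as the paper's: establish $C_0 = JB+Q \subseteq P$, climb the chain $C_t$ using the $\phi$-compatibility of $P$ to conclude $C \subseteq P$, and then pass to a minimal prime of $C$ contained in $P$, which cannot contain $K$ because $P$ does not. The one genuine difference is how the initial containment is justified: the paper cites test ideal theory (it observes that $C$ and $\tau(S/Q,\phi/Q)$ cut out the same locus and that the test ideal is the smallest $\phi/Q$-compatible ideal, via \cite{SchwedeFAdjunction}), whereas you argue directly from Proposition \ref{propFSplittingsProperties}(b) applied to the non-zero map $\phi/Q$, combined with the Fedder-style identification of $V(B)$ with $\Supp D_{\phi/Q}$ on the regular locus of $S/Q$ --- exactly the informal description of $B$ given before Lemma \ref{lem.ProofOf2} --- which makes the step more self-contained at the cost of having to verify that identification. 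Your explicit restriction to primes $P$ not containing $K$ is the right reading of step (4*), and indeed necessary: every prime containing $K$ is $\phi$-compatible (Remark \ref{rem.FindingBoringPrimes}) while the $Q_i$ by definition avoid $K$, and the restriction is precisely what guarantees that the minimal prime of $C$ inside $P$ is one of the listed $Q_i$; the paper leaves this implicit. One small correction: the locus where $\Hom_{S/Q}(F^e_*(S/Q), S/Q) \cong F^e_*\omega_{S/Q}^{(1-p^e)}$ is locally free of rank one is the Gorenstein (in particular the regular) locus, not the normal locus in general, and the cokernel is only a sheaf supported on $D_{\phi/Q}$ (a twist of its structure sheaf) rather than $\O_{D_{\phi/Q}}$ itself; since the singular locus is already absorbed into $V(J)$ and only supports matter, the statement you actually need --- that $\Supp D_{\phi/Q}$ lies in $V(B)$ over the regular locus --- is unaffected.
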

\begin{proof}
Note that $C$ vanishes on the same locus as the test ideal $\tau(S/Q, \phi/Q)$ by \cite{SchwedeFAdjunction} and Proposition \ref{propFSplittingsProperties}.  In particular, $P$ contains $\sqrt{ \tau(S/Q, \phi/Q) } \supseteq (JB + Q)/Q = C_0$ because the test ideal is the unique smallest $\phi/Q$-compatible ideal.  Thus $P = \phi(F^e_* P) + P \supseteq \phi(F^e_* C_0) + C_0 = C_1$ and so recursively, $P$ contains $C$ and thus also $P$ contains $\sqrt{C}$ and so it also contains a minimal prime of $\sqrt{C}$.
\end{proof}

\begin{remark}
\label{rem.FindingBoringPrimes}
Finally we explain how to find all the $\phi$-compatible primes.  We have just found all the $\phi$-compatible ideals not containing $K$.  On the other hand, suppose $P \supseteq K$ is a prime ideal, we will show it is \emph{always} $\phi$-compatible.  It is easy to see that $P$ is $\phi$-compatible if and only if $P/K$ is $\phi/K$ compatible.  But $P/K$ is clearly compatible since $\phi/K$ is zero.
\end{remark}

%%%%%%%%%%%%%%%%%%%%%%%%%%%%%%%%%%%%%%%%%%%%%%%%%%
\section{Further remarks}
\label{sec.FurtherRemarks}
%%%%%%%%%%%%%%%%%%%%%%%%%%%%%%%%%%%%%%%%%%%%%%%%%%
In this section we briefly discuss issues related to the implementation of this algorithm and also discuss some connections with previous work.

\subsection{Notes on implementation and complexity}
\label{Section: Notes on implementation}

In this subsection, we briefly discuss the issues surrounding the implementation of this algorithm.

The computationally intensive steps in the algorithm involve computation of the singular locus in step (1), the computation of colon ideals in step (2), the repeated application of $\phi$ in (3) and finally primary decomposition in step (4).   Indeed, all the steps (1), (2) and (4) are already implemented in many computer algebra systems (for example Macaulay2).  Step (1) is simply the computation of, potentially many, minors (although any single minor that doesn't vanish on the given $Q$ would suffice).  The computation of the colons of ideals in step (2) reduces to the computations of ideal intersections as described in \cite[Section 1.8.8]{GreuelPfisterASingularIntroduction}.   Step (3) will be handled below and the primary decomposition in step (4), a list of references for the history of computing primary decomposition can be found on \cite[Page 206]{CoxLittleOSheaIdealsVarietiesAlgorithms}.

In order to implement the algorithm (or the generalized version from Section \ref{sec.GeneralizedAlg}), we need to answer the following question.

\begin{question}
How does one compute the images of the $S$-linear maps $\phi : F^e_* S \to S$?
\end{question}

We fix a $\phi=(F^e_* u) \cdot \Phi_S \in \Hom_S(F^e_* S, S)$.
Given any ideal $J \subseteq S$, $\Phi_S(F^e_* J)$ is an ideal that has appeared previously in several contexts.  Again, $\Phi_S$ is as in example \ref{ex.PolyRing}.
Notably, in \cite{BlickleMustataSmithDiscretenessAndRationalityOfFThresholds} and \cite{KatzmanParameterTestIdealOfCMRings}, also see \cite[Proposition 3.10]{BlickleSchwedeTakagiZhang}, it was shown that
$\Phi_S(F^e_* J)$ is the unique smallest ideal $A \subseteq S$ with the property that $J \subseteq A^{[p^e]}$.
In these contexts it was denoted by $I^{[1/p^e]}$ and $I_e(J)$ respectively.
In the context described, where $S$ is a free module over $S^{p^e}$,
this ideal is highly computable as we next show (cf.~\cite[Section 5]{KatzmanParameterTestIdealOfCMRings} and ~\cite[Proposition 2.5]{BlickleMustataSmithDiscretenessAndRationalityOfFThresholds}).
%We notice that $\Phi_S(F^e_* L)$ coincides with $I_{e}(F^e_* L)$, see for example \cite[Proposition 3.10]{BlickleSchwedeTakagiZhang} for details.

It is easy to see that for ideals $J_1, J_2 \subseteq S$, we have $I_e(J_1+J_2)=I_e(J_1)+I_e(J_2)$, so the calculation of
$I_e(J)$ reduces to the case where $J$ is generated by one element $g\in S$.
Let $\alpha$ denote $n$-tuple of non-negative integers $(\alpha_1, \dots, \alpha_n)$, let $0\leq \alpha <p^e$ denote the condition
$0\leq \alpha_1, \dots, \alpha_n<p^e$ and
write $x^\alpha=x_1^{\alpha_1} \cdots x_n^{\alpha_n}$.
Now write
\[g=\sum_{b\in \mathcal{B}, \atop{0\leq \alpha<p^e}}
g_{b,\alpha}^{p^e} b x^\alpha\]
where $g_{b,\alpha}\in S$.
We claim that $I_e(\langle g \rangle)$ is the ideal $A$ generated by
$\left\{g_{b, \alpha} \,|\, b\in \mathcal{B}, 0\leq \alpha < p^e \right\}$.
To see this note first that, clearly, $g\in A^{[p^e]}$.
If $L\subseteq S$ is such that $g\in L^{[p^e]}$ then we can find
$a_1,\dots, a_s\in L$ and $r_1, \dots, r_s\in S$ such that
$$g=\sum_{b\in \mathcal{B}, \atop{0\leq \alpha< p^e}} g_{b,\alpha}^{p^e} b x^{\alpha}  =\sum_{i=1}^s r_i a_i^{p^e} .$$
For all $1\leq i\leq s$ we can now write
$$r_i =\sum_{b\in \mathcal{B}, \atop{0\leq \alpha < p^e}} r_{b,\alpha,i}^{p^e} b x^\alpha$$
where $r_{b,\alpha,i}\in S$ and we obtain
$$\sum_{b\in \mathcal{B}, \atop{0\leq \alpha< p^e}} g_{b,\alpha}^{p^e} b x^\alpha=
\sum_{b\in \mathcal{B}, \atop{0\leq \alpha< p^e}} \left(\sum_{i=1}^s r_{b,\alpha,i}^{p^e} a_i^{p^e}\right) b x^\alpha .$$
Since these are direct sums, we may compare coefficients and deduce that
for all $b\in \mathcal{B}$ and $0\leq \alpha<p^e$, $g_{b, \alpha}^{p^e}=\sum_{i=1}^s r_{b,\alpha,i}^{p^e} a_i^{p^e}$ hence
$g_{b, \alpha}=\sum_{i=1}^s r_{b, \alpha,i} a_i$
and $g_{b,\alpha} \in L$.

This construction translates easily into an algorithm as follows.
Extend the ring $S$ to $T=S[y_1, \dots, y_n]$, chose a term ordering in which the variables $x_1, \dots, x_n$
are bigger than $y_1, \dots, y_n$ and reduce the $g\in T$ with respect to the ideal generated by $x_1^{p^e}-y_1, \dots, x_n^{p^e}-y_n$
to obtain $g\equiv \sum_{0\leq \alpha < p^e} g_\alpha x^\alpha$ where $g_\alpha\in k[y_1, \dots, y_n]$.
For each $0\leq \alpha < p^e$ we can write
$g_\alpha$  as a sum of terms $\lambda_1 y^{\beta^{(1)}} + \dots + \lambda_m y^{\beta^{(m)}}$ with
$\lambda_1, \dots, \lambda_m\in k$ and for each $1\leq i \leq m$ we can write
$\lambda_i=\sum_{b\in \mathcal{B}} \lambda_{i,b}^{p^e} b$ where $\lambda_{i,b}\in k$.
Now $g_\alpha=\sum_{b\in \mathcal{B}} \sum_{i=1}^m \lambda_{i,b}^{p^e} b y^{\beta^{(i)}}$
and $I_e(g)$ is the ideal generated by $\sum_{i=1}^m \lambda_{i,b}  x^{\beta^{(i)}}$ for all choices of
$0\leq \alpha<p^e$ and $b\in \mathcal{B}$.
Notice that the complexity of applying $\phi$ is essentially the complexity of finding the $S^p$-coordinates
of elements in $S$ in terms of a given set of free generators of $S$.

One can also ask the following.

\begin{question}
How many times must $\phi$ be applied in step (3)?
\end{question}

In the experiments we have done so far, the condition in step (3) does not seem to be a limiting factor since applying $\Phi_S$ is itself a very fast operation and the required value of $t$ is quite small.  Of course, the particular $t$ needed also depends upon the given ideal $Q$.

Indeed, one can give a reasonable bound on $t$ based upon the sort of analysis found in \cite{BlickleMustataSmithDiscretenessAndRationalityOfFThresholds}.  Fix $\ba$ an ideal in $S = k[x_1, \dots, x_n]$ and write $\phi(F^e_* \bullet) = \Phi_S(F^e_* z \cdot \bullet)$.  Suppose that $\ba$ is generated by elements of degree at most $d$ in $S$ so that $z \cdot \ba$ is generated by elements of degree at most $(\deg z) + d$.  We now consider the generators of $F^e_* (z \cdot \ba) \subseteq F^e_* S$ as an \emph{$S$-module}.  It is easy to see that $F^e_* (z \cdot \ba)$ is generated by elements of degree at most $(\deg z) + d + (p^e - 1)n$ (since $F^e_* S$ is as an $S$-module is generated by elements of degree at most $(p^e - 1)n$).  Note that if $f$ is a polynomial of degree $k$, then it follows that $\Phi_S(F^e_* f)$ is a polynomial of degree at most $\lfloor \frac{k - (p^e - 1)n}{p^e} \rfloor$.  We the see that $\Phi_S(F^e_*  z \cdot \ba)$ is generated by elements of degree at most
\[
\Big\lfloor \frac{(\deg z) + d + (p^e - 1)n - (p^e - 1)n}{p^e} \Big\rfloor = \Big\lfloor \frac{(\deg z) + d}{p^e} \Big\rfloor.
\]
If we apply $\phi$ again, we obtain an ideal generated by elements of degree at most
\[
\Bigg\lfloor \frac{(\deg z) + \Big\lfloor \frac{(\deg z) + d}{p^e} \Big\rfloor}{p^e} \Bigg\rfloor \leq \Big\lfloor \frac{(1 + p^e)(\deg z) + d}{p^{2e}} \Big\rfloor
\]
For $t \gg 0$, $\phi^t(F^{te}_* \ba)$ is generated by elements of degree at most
\[
\Big\lfloor \frac{(1 + p^e + \dots + p^{(t-1)e})(\deg z) + d}{p^{te}} \Big\rfloor \leq \Big\lfloor \frac{(p^{te} - 1)(\deg z)}{p^{(t+1)e}} \Big\rfloor + 1\leq \Big\lfloor \frac{\deg z}{p^{e}}\Big\rfloor + 1.
\]
We now ask how many times need we to apply $\phi$ before we reach this stable degree (which is a vector space of polynomials of bounded degree).  But for this we merely need $d/p^{te} \leq 1$, or in other words after at most $\lceil \log_{p^e}(d) \rceil$ applications of $\phi$.

Of course, we still may need to apply $\phi$ further when are working within this vector space.  However, as soon as the containment $\phi^{t}(F^{te}_* z \cdot \ba) \supseteq \phi^{t+1}(F^{(t+1)e}_* z \cdot \ba)$ is equality, that step in our algorithm terminates.  In particular, we are working within this fixed vector space which has dimension at most the binomial coefficient:
\[
M := \binom{\Big\lfloor \frac{\deg z}{p^{e}}\Big\rfloor + 1 + n }{ n }.
\]
Recall that $n$ is the number of variables in $S$.  Therefore, we require at most $M$ applications of $\phi$.

In summary, in step (3), we need only apply $\phi$ at most:
\[
\lceil \log_{p^e}(d) \rceil + \binom{\Big\lfloor \frac{\deg z}{p^{e}}\Big\rfloor + 1 + n }{ n }
\]
times.

\subsection{Connections with previous work}

We now also briefly explain some of the other ways the ideas in this algorithm have previously appeared.  We use the notation from Section \ref{Section: Notes on implementation}, in particular $\phi(\bullet) = \Phi_S(u \cdot \bullet)$.

\begin{definition} \cite[Definition 5.5]{KatzmanParameterTestIdealOfCMRings}, \cf \cite{BlickleMustataSmithDiscretenessAndRationalityOfFThresholds}
As above, fix $u \in R$.  For any ideal $J\subseteq S$ we define  $J^{\star^e u}$ to be the smallest ideal $A\subseteq S$ containing $J$ with the property $u A \subseteq A^{[p^e]}$.
\end{definition}

It follows that the sequence of ideals $\{ C_i \}_{i\geq 0}$ from step (3*) stabilizes at the value $\left(JB+Q\right)^{\star^e u}$.  This construction (and the fact that it had already been computed by the first author) was part of the inspiration for this project.  In view of this, step (3*) can be totally phrased in the language of $J^{\star^e u}$.

The original motivation for studying $J^{\star^e u}$ was to compute the test ideal.  In particular, the ideal $C$ we construct in (3*) restricts to the test ideal $\tau(S/Q, \phi/Q)$ in many cases (with additional work, it can always be made to restrict to the test ideal, see for example \cite{KatzmanParameterTestIdealOfCMRings}); however, it always restricts to the test ideal $\tau(S/Q, \phi/Q)$ if $\phi/Q$ is surjective.

%MK
%We highlight one more connection.

%\begin{definition} \cite[section 5]{KatzmanParameterTestIdealOfCMRings} \cite{BlickleMustataSmithDiscretenessAndRationalityOfFThresholds}
%For any ideal $J\subseteq S$ and $\ell\geq 0$, let $I_\ell(J)$ be the smallest ideal $A\subseteq S$ with the property
%$J\subseteq A^{[p^\ell]}$.
%\end{definition}

%We notice that $\Phi_S^{\ell}(F^e_* L)$ also coincides with $I_{\ell}(F^e_* L)$, see for example \cite[Proposition 3.10]{BlickleSchwedeTakagiZhang} for details.

\begin{remark}
One can also view the results of this paper from the point of view of Frobenius maps on the injective hull of residue fields. Let $(S, m)$ be a complete local regular
ring and $E=E_S(S/m)$ is the injective hull of its residue field. Let $f: S \rightarrow S$ be the Frobenius map
$f(s)=s^p$, and let $S[\Theta; f^e]$ be the skew-polynomial ring with coefficients in $S$ where the variable $\Theta$ satisfies $\Theta s = f^e(s) \Theta$
for all $s\in S$.

The $S$-module $E$ has a natural structure of $S[T; f^e]$-module which can be described by identifying $E$ with a module of inverse polynomials
$k[x_1^-, \dots, x_n^-]$ (cf.~\cite[Example 12.4.1]{BrodmannSharpLocalCohomology}) and extending additively the action
$T \lambda x_1^{-\beta_1}  \cdots x_n^{-\beta_n}  = \lambda^{p^e} x_1^{-p^e \beta_1}  \cdots x_n^{-p^e \beta_n} $ for all
$\lambda\in k$ and $\beta_1, \dots, \beta_n>0$.
Any structure of $S[\Theta; f^e]$-module on $E$ is given by $\Theta=u T$, where $T$ is the natural action above, and, with this $\Theta$,
an $S$-submodule $\Ann_{E} J \subseteq E$ is an $S[\Theta; f^e]$-submodule if and only if $u J \subseteq J^{[p^e]}$ (cf.~\cite[section 4]{KatzmanParameterTestIdealOfCMRings}).
Thus we see that the $u T$-compatible ideals of $R=S/I$ are the annihilators of $S[\Theta; f^e]$-submodules of $E$ which contain $I$, i.e.,
$S[\Theta; f^e]$-submodules of $\Ann_E I=E_R(R/mR)$ and hence our algorithm produces these.
%MK
These annihilators form the set of \emph{special ideals} in the language of
 \cite{SharpGradedAnnihilatorsOfModulesOverTheFrobeniusSkewPolynomialRing} and
 \cite{KatzmanParameterTestIdealOfCMRings}.
An analysis of our algorithm shows that it will produce all special primes $P$ for which the restriction of $\Theta$ to $\Ann_{E} P$ is not
the zero map.
\end{remark}

%%%%%%%%%%%%%%%%%%%%%%%%%%%%%%%%%%%%%%%%%%%%%%%%%%
\section{The algorithm in action}%MK
%%%%%%%%%%%%%%%%%%%%%%%%%%%%%%%%%%%%%%%%%%%%%%%%%%

In this section we present some interesting calculations performed with a Macaulay2 implementation of the algorithms presented in this paper.

First we include an example where we step through the algorithm.  This example is also interesting because the ideal defining the singular locus of $R = S/I$ is not always compatible with our choice of $\phi$.
\begin{example}
Consider the ring $S = k[x,y,z,w]$ where $k$ is any perfect field of characteristic $3$ and set $I = \langle x^2 - yz \rangle$.

We set
\[
z = (x^2 - yz)^{2}w^2x(x+1) = x^6 w^2 + x^4 yzw^2 + x^2 y^2 z^2 w^2 + x^5 w^2 + x^3 yzw^2 + xy^2z^2w^2
 \]
and fix $\phi(F_* \bullet) = \Phi_S(F_* z \cdot \bullet)$.  It is easy to see that $\phi$ is $I$-compatible by Fedder's Lemma \ref{lem.FeddersLemma} noting that $z$ is a multiple of $(x^2 w - yzw)^{2}$ .  Furthermore, note that $z$ has a term $x^2y^2z^2w^{2} \notin \bm^{[3]} = \langle x^3, y^3, z^3, w^3 \rangle$, which implies that $\phi$ is surjective at the origin.  More generally, the same term implies that $\phi(F_* 1) = 1$ which means that $\phi$ is surjective everywhere as it is a Frobenius splitting.

First we set $Q= I$
In step (1) of the algorithm, we compute the singular locus of $S/Q$, it is defined by the ideal $J = \langle x,y,z \rangle$.  For step (2) of the algorithm, Macaulay2 will easily verify that $B := (z + Q^{[3]}) : (Q^{[3]} : Q) = \langle x^2 - yz, x^2w^2 + xw^2 \rangle$. Now we move on to step (3): one can either verify by hand, or by Macaulay2 that
\[
\phi(F_* JB+Q) = \langle w, x^2 - yz \rangle.
\]
On the other hand
\[
\phi^2\left(F^2_* (JB + Q)\right) = \phi(F_* \phi(F_* \langle w, x^2 -yz \rangle)) = \langle w, x^2 - yz \rangle
\]
as well, so the ideal $C = \langle w, x^2 - yz \rangle$.  This ideal is already prime so no primary decomposition is needed for step (4).

Now we repeat the algorithm with a new $Q' := \langle w, x^2 - yz \rangle$.  In step (1), the singular locus is now $J' = \langle x,y,z,w \rangle$.  In step (2), one obtains $B' := \langle w, yz + x, x^2 + x\rangle$.  For step (3), we easily compute that
\[
\phi(F_* J'B'+Q') = S = \phi^2(F^2_* J'B'+Q')
\]
In particular, there are no new $Q_i$ and the algorithm terminates.

It follows that the only proper non-zero $\phi$-compatible ideal of $S$ properly containing $I$ is $\langle w, x^2 - yz \rangle_S$.  In particular, the only proper non-zero $\phi/I = \bar\phi$-compatible ideal of $R = S/I$ is the ideal $\langle w \rangle_{R}$.
\end{example}

Now we perform a more involved calculation which looks in greater detail at the example given in \cite[section 9]{KatzmanParameterTestIdealOfCMRings}.

\begin{example}
Let $\mathbb{K}$ be the field of two elements, $S=\mathbb{K}[x_1, x_2, x_3, x_4, x_5]$, denote $\mathfrak{m}=\langle x_1, x_2, x_3, x_4, x_5 \rangle$.
%and let $E$ denote the injective hull of $R/\mathfrak{m}$.

Now take $I$ be the ideal of $S$ generated by the $2\times 2$ minors of
$$\left( \begin{array}{llll} x_1 & x_2 & x_2  & x_5 \\ x_4 & x_4 & x_3 & x_1 \end{array} \right)$$
and let $S=R/I$.
We consider a $\phi\in\Hom_R(F^1_*S,S)$ induced by pre-multiplying the $R$-linear map $\Phi_S$ from Example \ref{ex.PolyRing} by an element $z \in (I^{[p]} : I) \subseteq S$
\[
z = {x}_{1}^{3} {x}_{2} {x}_{3}+{x}_{1}^{3} {x}_{2} {x}_{4}+{x}_{1}^{2} {x}_{3} {x}_{4} {x}_{5}+{x}_{1} {x}_{2} {x}_{3} {x}_{4} {x}_{5}+{x}_{1} {x}_{2} {x}_{4}^{2}
{x}_{5}+{x}_{2}^{2} {x}_{4}^{2} {x}_{5}+{x}_{3} {x}_{4}^{2} {x}_{5}^{2}+{x}_{4}^{3} {x}_{5}^{2}
\]
Since $z \notin \langle x_1^2, x_2^2, x_3^2, x_4^2, x_5^2 \rangle = \bm^{[2]}$, we see that $\phi$ is surjective from Fedder's Lemma, Lemma \ref{lem.FeddersLemma}(b).
%, defined below.  %MK
%\begin{remark}
%This example
%The natural Frobenius map on the top local cohomology $\HH^2_{\mathfrak{m}S}(S)$ can be lifted to a Frobenius map
%on the injective hull $E_S$ of the residue field of $S$; this lifted Frobenius map is computed to be given by
%the restriction of $u T$ to $E_S=\Ann_{E} I$ where
%$$u={x}_{1}^{3} {x}_{2} {x}_{3}+{x}_{1}^{3} {x}_{2} {x}_{4}+{x}_{1}^{2} {x}_{3} {x}_{4} {x}_{5}+{x}_{1} {x}_{2} {x}_{3} {x}_{4} {x}_{5}+{x}_{1} {x}_{2} {x}_{4}^{2} {x}_{5}+{x}_{2}^{2} {x}_{4}^{2} {x}_{5}+{x}_{3} {x}_{4}^{2} {x}_{5}^{2}+{x}_{4}^{3} {x}_{5}^{2} $$
%and $T$ is the natural Frobenius map on $E$.
%As stated in \cite[section 9]{KatzmanParameterTestIdealOfCMRings}, $u T$ is injective on $E$, hence
%there are finitely many $R$-submodules of $E$ fixed under $u T$, whose set of annihilators
%consists of all intersections of a finite set of primes (the \emph{special primes}).

Our algorithm now produces a complete set of $\phi$-compatible primes as follows:
$$ R, \langle x_1, x_4 \rangle, \langle x_1, x_4, x_5 \rangle$$
$$ \langle x_1+x_2, x_1^2+x_4x_5\rangle , \langle x_1+x_2, x_1^2+x_4x_5\rangle , \langle x_3+x_4, x_1+x_2, x_2^2+x_4x_5\rangle ,$$
$$ \langle x_1, x_2, x_5 , x_3+x_4\rangle , \langle x_1, x_2, x_4\rangle , \langle x_1, x_2, x_5\rangle , \langle x_1, x_3, x_4\rangle ,$$
$$ \langle x_1, x_2, x_3, x_4\rangle , \langle x_1, x_2, x_4, x_5\rangle , \langle x_1, x_3, x_4, x_5\rangle , \mathfrak{m}$$

Consider now the Frobenius action $\Theta=zT$ on the injective hull $E$ of the residue field of
$\mathbb{K}[\![x_1, x_2, x_3, x_4, x_5]\!]$. The $\phi$-compatible primes above are also the
the special primes of the $S[\Theta; f]$-module $E$.
In \cite[section 9]{KatzmanParameterTestIdealOfCMRings} it was shown that
there is a $S[\Theta; f]$-linear surjection of $E$ onto $\HH^2_{\mathfrak{m}S}(S)$
where the latter is equipped with its canonical $S[\Theta; f]$-module structure, and
the set of  prime annihilators of $S[\Theta; f]$-submodules of  this quotient consist of the first three $\phi$-compatible primes above.

\end{example}

\begin{example}
We now consider an example suggested by the referee and inspired by the calculation of Schubert varieties, see in particular  \cite[section 7]{KnutsonFrobeniusSplittingPointCountingAndDegeneration} for the origin of the element $u$ below.

Let $S$ to be a polynomial
ring in indeterminates $\{ x_{i j} \,|\, 1\leq j < i \leq 4 \}$
over a field $\mathbb{K}$ of prime characteristic $2$ and let
$$u=x_{41} (x_{31} x_{42}-x_{41} x_{32}) (x_{41}-x_{21} x_{42}-x_{31} x_{43}+x_{21} x_{32} x_{43});$$
this is the product of the four lower left minors of the matrix
$$M=
\left[
\begin{array}{cccc}
1 & 0 & 0 & 0\\
x_{21} & 1 & 0 & 0\\
x_{31} & x_{32} & 1 & 0\\
x_{41} & x_{42} & x_{43} &1
\end{array}
\right] .
$$

For  sets $\alpha, \beta\subseteq \{1,2,3,4\}$ of the same cardinality, let $[\alpha, \beta]$ denote the determinant of the submatrix  of  $M$ consisting of the rows in $\alpha$ and the columns in $\beta$.
Our algorithm produces the following twenty-three compatible ideals:
the ideal generated by all variables, twelve  ideals generated by the variables in the positions\\
\MatrixDiagram{ '"7"  '"9"  '"12" '"a3" "7"}
\MatrixDiagram{ '"9"  '"14"  '"b2" '"7" "9"}
\MatrixDiagram{ '"9"  '"5"  '"a2" '"12" "9"}
\MatrixDiagram{ '"9"  '"3"  '"a1" '"12" "9"}
\MatrixDiagram{ '"9"  '"5"  '"a2" '"a3" '"b2" '"14" "9"}
\MatrixDiagram{ '"9"  '"7"  '"c1" '"16" "9"}\\
\MatrixDiagram{ '"9"  '"3"  '"a1" '"a3" '"b2" '"14" "9"}
\MatrixDiagram{ '"9"  '"5"  '"a2" '"a3" '"c1" '"16" "9"}
\MatrixDiagram{ '"9"  '"5"  '"b1" '"14" "9"}
\MatrixDiagram{ '"9"  '"3"  '"a1" '"a2" '"b1" '"b2" '"14" "9"}
\MatrixDiagram{ '"9"  '"5"  '"b1" '"b2"  '"c1" '"16" "9"}
\MatrixDiagram{ '"9"  '"3"  '"a1" '"a3" '"c1" '"16" "9"}\\
together with the ten ideals\\
$
\langle [34,12]\rangle, \
\langle [34,12], [34,13], [34,23]\rangle,\
\langle [34,12], [23,12], [24,12]\rangle, \\
\langle [34,12], [23,12], [34,13], [34,23], [24,12]\rangle,\
\langle [234,123]\rangle,\
\langle [234,123], x_{41}\rangle,\\
\langle [23,12],x_{41},x_{42}\rangle,\
\langle [23,12],x_{41},x_{42}, x_{43}\rangle, \\
\langle [34,23],x_{31},x_{41}\rangle,\
\langle [34,23],x_{21},x_{31},x_{41}\rangle,\
$

\end{example}

Finally, we compute an example when the map $\phi$ is not surjective to illustrate the more general algorithm.

\begin{example}
We fix a $2\times 4$ matrix of indeterminates, and we let $S$ to be a polynomial
ring in these indeterminates over a field $\mathbb{K}$ of prime characteristic $2$.

For any $1\leq i< j \leq 4$ we denote  $\Delta_{i j}$ the $2\times 2$ minor obtained from columns $i$ and $j$
and for any subset $A\subseteq \{1,2,3,4\}$ we denote $V_A$ the ideal generated by the matrix entries in all columns listed in $A$.

%***TODO (Karl):  I don't follow what $V_{1,2,3,4}$ is then, is it not just the maximal ideal, can we include an example?***

For  $z= \Delta_{1 2} \Delta_{1 3} \Delta_{1 4} = (x_{11}x_{22} - x_{21}x_{12})(x_{11} x_{23} - x_{21}x_{13})(x_{11} x_{24} - x_{21}x_{14})$, we form $\phi(F_* \bullet) = \Phi_S(F_* z \cdot \bullet)$. This $\phi$ is easily seen to not be surjective.  %our algorithm produces two sets of special primes.
Our generalized algorithm produces two sets of compatible ideals.
The first is the poset
\begin{equation*}
\xymatrix{
 & \langle\Delta_{1 2}, \Delta_{1 3}, \Delta_{1 4}, \Delta_{2 3}, \Delta_{2 4}, \Delta_{3 4}\rangle & \\
\langle\Delta_{1 2}, \Delta_{1 4}, \Delta_{2 4}\rangle  \ar@{-}[ur] &  \langle\Delta_{1 2}, \Delta_{1 3}, \Delta_{2 3}\rangle \ar@{-}[u]& \langle\Delta_{1 3}, \Delta_{1 4}, \Delta_{3 4}\rangle \ar@{-}[ul]\\
\langle \Delta_{1 2} \rangle \ar@{-}[u] \ar@{-}[ur] & \langle \Delta_{1 4}\rangle \ar@{-}[ur] \ar@{-}[ul] &\langle \Delta_{1 3} \rangle \ar@{-}[u] \ar@{-}[ul]\\
}
\end{equation*}

It is easy to see that $z \subseteq V_1^{[2]}$ and so the induced map $\phi/V_{1}$ is the zero map since $\phi(F_* S) \subseteq V_{1}$.  In particular, our algorithm also produces the following poset of primes.

% on the injective hull $E$ of $R/V_{1,2,3,4}$ is not injective:
%the submodule of $E$ killed by $uT$ is $\Ann_E V_{1}$.
%Our algorithm is only guaranteed to find all special primes $P$ for which
%$uT : \Ann_E P \rightarrow \Ann_E P$ is not the zero map, i.e., $P\nsupseteq V_{1}$.
%Our algorithm  also produces the following poset of special primes
\begin{equation*}
\xymatrix@C=15pt{
 & V_{1,2,3,4} & & & & & \\
V_{1,2,3}   \ar@{-}[ur] & V_{1,2,4}  \ar@{-}[u] & V_{1,3,4}  \ar@{-}[ul]& & & V_{1}+\langle\Delta_{2 3}, \Delta_{2 4}, \Delta_{3 4} \rangle & \\
V_{1,2}  \ar@{-}[u]  \ar@{-}[ur] & V_{1,3}  \ar@{-}[ur]  \ar@{-}[ul]& V_{1,4}  \ar@{-}[u]  \ar@{-}[ul]& & V_{1}+\langle\Delta_{2 3}\rangle\ar@{-}[ur]&  V_{1}+\langle\Delta_{2 4}\rangle \ar@{-}[u]&  V_{1}+\langle\Delta_{3 4}\rangle \ar@{-}[ul]\\
 & & & V_{1} \ar@{-}[ulll] \ar@{-}[ull] \ar@{-}[ul] \ar@{-}[ur] \ar@{-}[urr] \ar@{-}[urrr]& & & \\
}
\end{equation*}
Our generalized algorithm is only guaranteed to find those primes which are not contained in $\phi^t(F^t_* S)$ for $t \gg 0$.  However, these primes contain no information, see Remark \ref{rem.FindingBoringPrimes}, and are simply an artifact of the algorithm.
%which carries no information as any prime containing $V_{1}$ is a special prime, see Remark \ref{rem.FindingBoringPrimes}.
\end{example}

\bibliographystyle{skalpha}
\bibliography{CommonBib}

\end{document}